\renewcommand{\geq}{\geqslant}
\renewcommand{\leq}{\leqslant}
\newtheorem{theorem}{Theorem}[section]
\newtheorem{proposition}{Proposition}
\theoremstyle{definition}
\newtheorem{definition}[theorem]{Definition}
\newtheorem{remark}{Remark}
\date{}
\title{Some inverse problems around the tokamak \textsl{Tore Supra}\footnote{The second and the third authors are partially supported by the ANR project GAOS ``Geometric analysis of optimal shapes''}}
\author{Yannick Fischer\footnote{INRIA Sophia Antipolis M\'editerran\'ee, 2004 route des Lucioles, BP 93, 06902 Sophia-Antipolis, France;
yannick.fischer@sophia.inria.fr}\and 
Benjamin Marteau\footnote{ENSIMAG, 681, rue de la passerelle - Domaine universitaire - BP 72 - 38402 Saint Martin D'H\`eres, France;
marteaub@ensimag.fr}\and 
Yannick Privat\footnote{ENS Cachan Bretagne, CNRS, Univ. Rennes 1, IRMAR,
av. Robert Schuman, F-35170 Bruz, France; 
yannick.privat@bretagne.ens-cachan.fr}
}
\begin{document}

\maketitle
\begin{abstract}
We consider two inverse problems related to the tokamak \textsl{Tore Supra} through the study of the magnetostatic equation for the poloidal flux. The first one deals with the Cauchy issue of recovering in a two dimensional annular domain boundary magnetic values on the inner boundary, namely the limiter, from available overdetermined data on the outer boundary. Using tools from complex analysis and properties of genereralized Hardy spaces, we establish stability and existence properties. Secondly the inverse problem of recovering the shape of the plasma is addressed thank tools of shape optimization. Again results about existence and optimality are provided. They give rise to a fast algorithm of identification which is applied to several numerical simulations computing good results either for the classical harmonic case or for the data coming from \textsl{Tore Supra}.
\end{abstract}
\begin{center}
{\it This paper is dedicated to Michel Pierre on the occasion of his 60\textsuperscript{th} birthday.}
\end{center}
\vspace*{0.3cm}

{\bf Keywords:} \parbox[t]{11cm}{Hardy Spaces, Bounded Extremal Problem, Conjugate Harmonic Function, Inverse Problems, Shape Optimization, Least Square Problems.}\\ \\

{\bf AMS classification:} \parbox[t]{9.5cm}{Primary: 30H10, 49J20, 65N21; Secondary: 30H05, 42A50, 35N05.}\\

\section{Introduction}

A very challenging potential application of inverse boundary problems for elliptic equations in doubly connected domains is related to plasma confinement for thermonuclear fusion in a tokamak \cite{Blum}.

First of all the tokamak is a concept invented in the Soviet Union and introduced in the 1950s. The meaning of the word is, after translation from the Russian words, ``toroidal chamber with magnetic coils'' \cite{Wesson} and simply refers to a magnetic confinement device with toroidal geometry (see Figure \ref{FigYF1}). Today, tokamaks are the most used devices for the fusion experiments and needless to say represent the most suitable approach for their control. In the present paper we focus on the case of the tokamak \textsl{Tore Supra} built at the CEA/IRFM Cadarache (France).

\medskip
In order to control the plasma position by applying feedback control, it is necessary to know its position in a very short time, or in other words in a time which has to be smaller than the sampling frequency of the plasma shape controller (some microseconds). As a consequence, the poloidal flux function has to be fast identified and computed in a very effective way since its knowledge in the domain included between the exterior wall of the tokamak and the boundary of the plasma is sufficient to determine completely the plasma boundary. Insofar as the only data available are the ones obtained by measurements uniformly distributed on the entire exterior wall of the tokamak, namely the poloidal flux function and its normal derivative, the identification of the plasma boundary may be viewed as the solution of a free boundary problem.

The computation of the poloidal flux function in the vacuum and the recovery of the plasma shape have already been extensively studied (see for example \cite{Blum}). Consequently, several numerical codes have been developed and we mention here the code Apollo actually used for the tokamak \textsl{Tore Supra} \cite{Saint} based on an expansion of Taylor and Fourier types for the flux. Others expansions may be found in the literature such as the one making use of toroidal harmonics involving Legendre functions \cite{ALACRI}. Naturally, those series expansions are truncated for computations and the coefficients are determined so that they fit to the measurements.

Needless to say, the inverse problem consisting in identifying Dirichlet as well as Neumann data inside a domain from such overdetermined data on part of the external boundary has already generated a lot of methods. We refer for instance to Tikhonov regularization \cite{Tikhonov}, iterativ method \cite{Kozlov}, conformal mappings \cite{Haddar}, integral equations \cite{Rundell}, quasi-reversibility \cite{bourgeois,Klibanov,Lattes} or level-set \cite{Ameur,burger,burger2}.

\medskip
The purpose of this paper is to formulate alternative and original methods for the resolution of the free boundary problem. Two different approaches are considered: a first one making use of Complex Analysis tools and a second one based on Shape Optimization. Up to the knowledge of the authors, such approaches have never been carried out before in the case of the tokamak (although many methods have been developed to solve some close inverse problems, see for instance \cite{Ameur,bourgeois,burger, Haddar,Rundell}).

Firstly, we are interested in the following inverse problem : assume that the domain under study, denoted by $\Omega_l$, is the vacuum located between the outer boundary $\Gamma_e$ of the tokamak and the limiter $\Gamma_l$ (see Section \ref{geom_tok}) in such a way that it refers to a fixed annular domain, or more precisely, to a conformally equivalent doubly-connected domain. The poloidal flux and its normal derivative being given on $\Gamma_e$, we want to recover those magnetic data on $\Gamma_l$ inside the device. This amounts to solve a Cauchy problem from overdetermined data on part of the boundary. This problem is known to be ill-posed since the work of Hadamard. However sufficient conditions on the available data on $\Gamma_e$, together with \textit{a priori} hypotheses on the missing data, may be provided for continuity and stability properties to hold. In order to deal with these constraints, a link is established between the equation satisfied by the poloidal flux in the vacuum and the conjugate Beltrami equation \cite{BLRR}. As a result of this part, we will be able to provide the flux and its normal derivative everywhere in the domain $\Omega_l$ then more particularly on the limiter $\Gamma_l$ itself.

In a second time, we investigate the inverse problem consisting in recovering the shape of the plasma inside the tokamak, from the knowledge of Cauchy data on the external boundary $\Gamma_e$ (or on $\Gamma_l$ if the resolution of the inverse problem of last paragraph has been performed first). Indeed, if the plasma domain were known, the poloidal flux $u$ inside the tokamak would be determined until the boundary of the plasma $\Gamma_p$ by an elliptic partial differential equation, namely the homogeneous magnetostatic equation for the poloidal flux, with Cauchy data on $\Gamma_e$. The question of the global existence of a solution for such a system is open and probably quite difficult. The point of view developed in this paper does not need such a result. Indeed, given that the shape of the plasma is a level set of the poloidal flux $u$, the question of the determination of the shape of the plasma inside the tokamak comes down intrinsically to solve an overdetermined partial differential equation. Noticing that, in general, an overdetermined condition can be interpreted as the first necessary optimality conditions of a Shape Optimization problem (see for instance \cite{allaire,HP}), we chose to see the shape of the plasma as a minimizer of a shape functional in a given class of admissible domains. Furthermore, we use a standard shape gradient algorithm to compute numerically the boundary $\Gamma_p$.

We point out that the Shape Optimization part has led to numerical developpements that have been used in the present paper. However an effective numerical method based on the Complex Analysis part is under progress. This latter would extend some recent results obtained for the Laplace operator \cite{Haddar} to the case of more general elliptic operators and could probably help to initialize the part treating with the shape of the plasma (see discussion at end of Section \ref{sec:BEP}).

\medskip
The overview of the article is as follows. Section \ref{sec:plasequi} is devoted to general notations and a description of a model of plasma equilibrium. The equations governing this latter allow to characterize the boundary $\Gamma_p$ of the plasma as a level line of the flux in the domain bounded by the limiter. A brief description of the geometry of \textsl{Tore Supra} is provided too. Our main existence and stability results for the Cauchy problem explained above are stated in Section \ref{sec:magdata}. We introduce some generalized harmonic functions associated with the problem, which in fact belong to generalized Hardy spaces of an annulus. A useful density result in such classes is given which enables the resolution of the Cauchy problem formulated as a bounded extremal one. Finally, in Section \ref{sec:shape}, we introduce a Shape Optimization problem whose solution is supposed to be the shape of the plasma. We first prove the existence of minimizers for such a problem and state the associated necessary first order optimality conditions. In particular, it is shown that, under a regularity assumption for the optimum, the solution of this problem verifies the system characterizing the shape of the plasma. Some numerical simulations, based on an optimization algorithm and the use of the shape derivative, are included at the end of the paper and permit to recover in a satisfying way the boundary $\Gamma_p$ of the domain occupied by the plasma.

\section{A model of plasma equilibrium} \label{sec:plasequi}

\subsection{The magnetostatic equation for the poloidal flux}\label{grad_shaf}
We denote by $(r,\varphi,z)$ the three-dimensional cylindrical coordinates system where $r$ is the radial coordinate, $\varphi$ is the toroidal angle and $z$ is the height; $\vec{e_r},\vec{e_\varphi}$ and $\vec{e_z}$ will denote the axis unit vectors. Thus, given a generic vector $\vec{A}$, its component along the unit vectors will be denoted by $A_r, A_\varphi$ and $A_z$, respectively, so as to have

\begin{equation*}
\vec{A} = A_r \vec{e_r} + A_\varphi \vec{e_\varphi} + A_z\vec{e_z}.
\end{equation*}

Since the tokamak is an axisymmetric toroidal device, we may assume that all magnetic quantities do not depend on the toroidal angle $\varphi$. That involves that a plasma equilibrium may be studied in any cross section $(r,z)$, named poloidal section (see Figure \ref{FigYF1}).
\begin{figure}
\begin{center}
\includegraphics{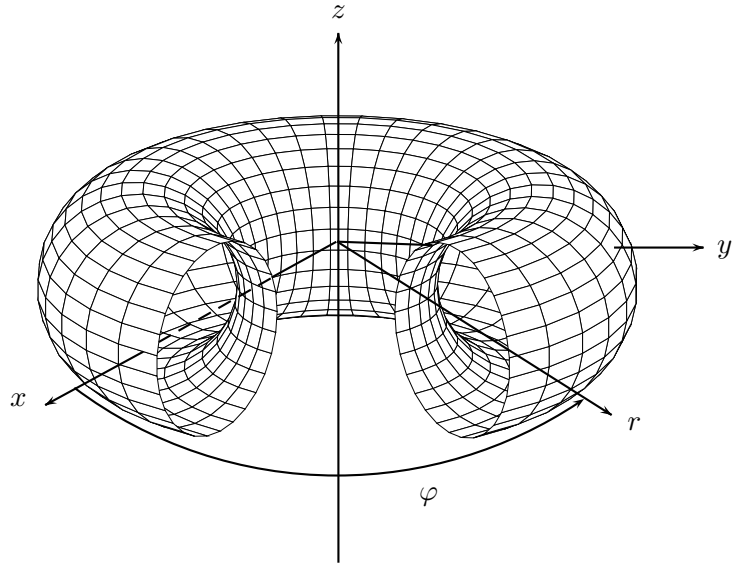}
\vspace{-1cm}\caption{Toroidal geometry of the tokamak Tore Supra.}
\label{FigYF1}
\end{center}
\end{figure}
Now writing $\vec{B}$ for the magnetic induction and $u$ for the poloidal magnetic flux, and denoting by $\nabla \cdot \vec{B} = \tfrac{\partial B_r}{\partial r}+\tfrac{\partial B_z}{\partial z}$, $\nabla \times \vec{B} = \tfrac{\partial B_z}{\partial r}-\tfrac{\partial B_r}{\partial z}$ and $\nabla u = (\tfrac{\partial u}{\partial r},\tfrac{\partial u}{\partial z})$, it is easy to derive from Maxwell's equations \cite{Blum}, especially from Gauss's law $\nabla \cdot \vec{B} =0$, the following equations

\begin{equation} \label{YF:eq1}
B_r = -\dfrac{1}{r} \dfrac{\partial u}{\partial z} \quad \text{and} \quad B_ z = \dfrac{1}{r} \dfrac{\partial u}{\partial r}.
\end{equation}

Combining those relations with Ampere's law in the vacuum region located between the outer boundary of the tokamak and the limiter (see Section \ref{geom_tok}), i.e

\begin{equation} \label{YF:eq2}
\nabla \times \vec{B} = 0,
\end{equation}
we obtain the following equation for the poloidal flux $u$ in the vacuum

\begin{equation} \label{YF:eqplasma}
\nabla \cdot (\dfrac{1}{r} \nabla u) = 0.
\end{equation}

It is a linear homogeneous second order elliptic equation in divergence form in two dimensions. Observe that this latter does not stand in the whole space located inside the tokamak owing to the presence of a current density $\vec{j}$ in the plasma. Here, denoting by $j_T$ its toroidal component, equation (\ref{YF:eqplasma}) becomes the so-called Grad-Shafranov \cite{GRAD} equation
\begin{equation} \label{YF:eq3}
\nabla \cdot (\dfrac{1}{r} \nabla u) = j_T,
\end{equation}
which is a non-linear elliptic equation in two dimensions. However observe that in the framework if this paper, we only consider equation (\ref{YF:eqplasma}).

\subsection{The geometry of \textsl{Tore Supra}} \label{geom_tok}

We present here the main features of the tokamak \textsl{Tore Supra} built at CEA/IRFM Cadarache in France in a poloidal cross section $(r,z)$. All numeric quantities for the distances are given in meters.

\begin{figure}
\begin{center}
\includegraphics{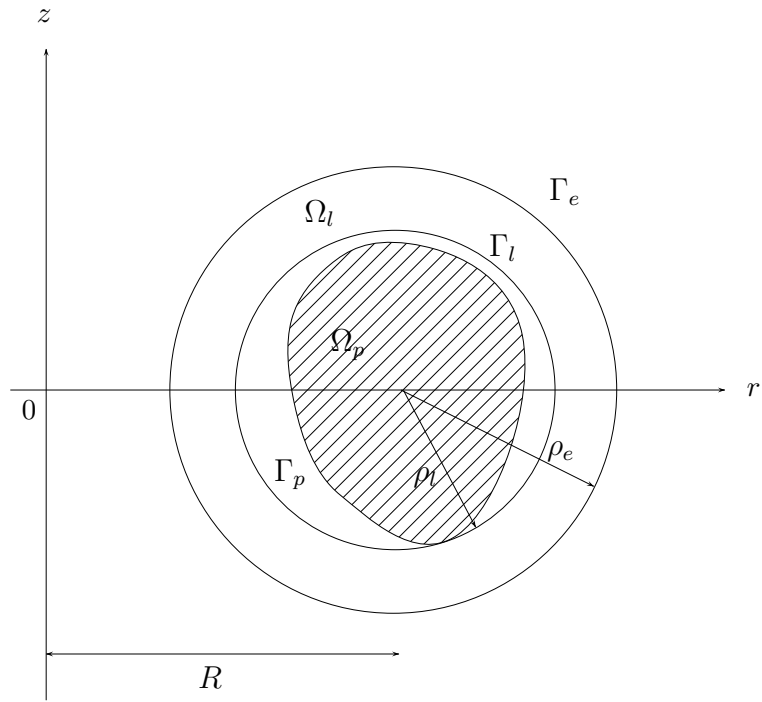}
\caption{Schematic representation of a poloidal section of Tore Supra.}
\label{FigYF2}
\end{center}
\end{figure}

Here $\Gamma_e$ stands for the outer boundary of the tokamak. It corresponds to a circle with center $(R,0) = (2.42,0)$ and radius $\rho_e = 0.92$. Inside the tokamak, the first material piece encountered is the limiter $\Gamma_l$. It corresponds to a closed curve and its goal is to prevent any interaction between the plasma and $\Gamma_e$. In a nutshell, the plasma cannot pass through $\Gamma_l$. Inside the limiter stands the plasma, referring to the domain $\Omega_p$ with a boundary $\partial \Omega_p = \Gamma_p$.

Furthermore we add two notations. In the following we will denote by $\Omega_v$ the domain occupied by the vacuum, i.e the one included between $\Gamma_p$ and $\Gamma_e$. Moreover $\Omega_l$ will denote the domain included between $\Gamma_l$ and $\Gamma_e$. Then refering to (\ref{YF:eqplasma}), the equation satisfied by the poloidal magnetic flux $u$ in $\Omega_v$, whence {\it{a fortiori}} in $\Omega_l$,  is $ \nabla \cdot (\tfrac{1}{r} \nabla u) = 0$.

It should be noticed that in \textsl{Tore Supra}, the domain $\Omega_l$ is not exactly an annulus as it can be observed on Figure \ref{fig4}. But as explained in \cite[Section 6]{BLRR}, all properties of functions solutions to (\ref{YF:divu}) below are preserved by composition with conformal maps. Accordingly, the results of the next section will always refer to a annular domain like in Figure \ref{FigYF2}.

\subsection{Definition of the plasma boundary \texorpdfstring{$\Gamma_p$}{Gammap}} \label{sec:defplasmabound}

A commonly accepted definition of the plasma boundary $\Gamma_p$ may be given in these terms \cite{Blum}: it is the outermost closed magnetic surface entirely contained in the limiter $\Gamma_l$. Moreover it corresponds to an equipotential of the function $u$. In fact, two situations can happen for the plasma equilibrium in tokamak devices:

\medskip
\begin{itemize}
\item the limiter case. Here $\Gamma_p$ and $\Gamma_l$ have at least one common point. The equipotential's value of $u$ defining the plasma boundary $\Gamma_p$ is given by the maximum of $u$ on the limiter $\Gamma_l$. At all contact points, the limiter and the plasma are tangent.
\item the divertor case. $\Gamma_p$ and $\Gamma_l$ have no point in common. In this case, the plasma is strictly included inside the limiter and its boundary has an X-point (hyperbolic point).
\end{itemize}

Let us specify that we made no attempt to study the divertor case. In fact, the paper focuses only on the limiter case, more particularly when the plasma and the limiter have only one point in common. However it should be noticed that this study could easily be applied in the case of several common points. We shall make use of this observation without further notice.

\section{How to recover magnetic data in the tokamak \textsl{Tore Supra} ?} \label{sec:magdata}

Let $\Omega_l$ be the bounded open domain in $\mathbb{R}^2$ with $C^1$ boundary mentioned in Section \ref{geom_tok} and
\begin{equation}
\label{YF:condsigma}
\sigma(r,z) = \dfrac{1}{r} \in W^{1,\infty}(\Omega_l;\mathbb{R}) \quad \text{such that} \quad 0 < C_1 \leq \sigma(r,z) \leq C_2 \quad \text{a.e in} \quad \Omega_l
\end{equation}
for two constants $ 0 < C_1 < C_2 < +\infty $. The elliptic equation we look at is (\ref{YF:eqplasma}) in the vacuum, i.e
\begin{equation}
\label{YF:divu}
\nabla \cdot (\dfrac{1}{r} \nabla u) = 0 \quad \text{a.e. in} \quad \Omega_l.
\end{equation}
Here the differential operator $\nabla \cdot$ and $\nabla$ are understood in the sense of distributions (with respect to the real variables $r,z$ in $\mathbb{R}^2$). In this section we are dealing with the Cauchy problem described in the introduction. This problem is, as defined by Hadamard, ill-posed for magnetic data $u$ and $\partial_n u$ prescribed on $\Gamma_e$ not only in Sobolev spaces $W^{1-1/p,p}(\Gamma_e), 1 < p < +\infty$ but also in $L^p(\Gamma_e)$, where they still make sense \cite{BLRR}. However, well-posedness may be ensured if $L^2$-norm constraints is added on $\Gamma_l$. It follows that the extrapolation issue from boundary data turned out as a best approximation one on $\Gamma_e$, still named bounded extremal problem (BEP).

Note that from now on we restrict ourselves to the particular Hilbertian framework, i.e for $p=2$, in order to deal with any possible boundary data on $\Gamma_e$ having finite energy.

\begin{remark}
When $\sigma$ is constant, this problem reduces to solving a Cauchy problem for the Laplace operator, that is to say to recover the values of a holomorphic function in a domain of analyticity from part of its boundary values. It is well worth noting that this problem has been extensively studied in Hardy spaces on simply and doubly connected domains (see \cite{BL1,BL2,JLMP}).
\end{remark}

\subsection{From Grad-Shafranov equation to conjugate Beltrami equation}

To study the Cauchy problem for (\ref{YF:divu}), our approach proceeds via a complex elliptic equation of the first order, namely the conjugate Beltrami equation:
\begin{equation}
\label{YF:CB}
\overline{\partial} f = \nu \overline{\partial f} \quad \text{a.e in} \quad \Omega_l,
\end{equation}
where $\nu \in W^{1,\infty}(\Omega_l;\mathbb{R})$ satisfies
\begin{equation}
\label{YF:condnu}
\|\nu\|_{L^{\infty}(\Omega)} \leq \kappa \quad \text{for some} \quad \kappa \in (0,1)
\end{equation}
and where we use the standard notations
\begin{equation*}
\partial f = \dfrac{1}{2} (\dfrac{\partial}{\partial r} - i \dfrac{\partial}{\partial z}) f \quad \text{and} \quad \overline{\partial} f = \dfrac{1}{2} (\dfrac{\partial}{\partial r} + i \dfrac{\partial}{\partial z}) f.
\end{equation*}

From this point, it is easy to verify that (\ref{YF:CB}) decomposes into a system of two real elliptic equations of the second order in divergence form. Indeed, assume first that $f=u+iv$ is a solution to (\ref{YF:CB}) with real valued $u$ and $v$. Putting this in (\ref{YF:CB}) yields that $u$ satisfies (\ref{YF:divu}) while $v$ satisfies
\begin{equation}
\label{YF:divv}
\nabla \cdot (\dfrac{1}{\sigma} \nabla v) = 0 \quad \text{a.e. in} \quad \Omega_l
\end{equation}
with $\sigma = \tfrac{1-\nu}{1+\nu}$. Moreover, from that definition of $\sigma$, we obtain that (\ref{YF:condnu}) implies (\ref{YF:condsigma}). Conversely, assume that $u$ is real valued and satisfies (\ref{YF:divu}). By putting $\nu = \tfrac{1-\sigma}{1+\sigma}$, a simple computation shows that (\ref{YF:CB}) is equivalent to the system
\begin{equation}
\label{YF:CR}
\left\{ \begin{array}{lll}
\dfrac{\partial v}{\partial r} &=& -\sigma \dfrac{\partial u}{\partial z} \\
& & \quad \quad \quad \quad \quad \quad \text{a.e. in} \quad \Omega_l. \\
\dfrac{\partial v}{\partial z} &=& \sigma \dfrac{\partial u}{\partial r}
\end{array} \right.
\end{equation}
This system admits a solution when $\Omega_l$ is simply connected. Indeed the differential form defined by $W = (W_1,W_2)=(\sigma \tfrac{\partial u}{\partial r},-\sigma \tfrac{\partial u}{\partial z})$ satisfies $\tfrac{\partial W_1}{\partial r} = \tfrac{\partial W_2}{\partial z}$. Consequently it is closed and applying Poincar\'e's Lemma yields the existence of a real valued function $v$, unique up to an additive constant, such that $f=u+iv$ satisfies (\ref{YF:CB}) with the definition of $\nu$ given above. Again, conditions (\ref{YF:condsigma}) implies (\ref{YF:condnu}).

\medskip
Note that when $\Omega_l$ is doubly connected, such like in the situation of Tore Supra, the existence of $v$ as a real single-valued function may only be local. Indeed, from Green's formula applied to $u$ and to any constant function in $\Omega_l$, we get
\begin{equation*}  
0 = \int_{\Omega_l} \nabla \cdot (\sigma \nabla u) = \int_{\partial \Omega_l} \sigma \dfrac{\partial u}{\partial n} = \int_{\Gamma_e}\sigma \dfrac{\partial u}{\partial n} + \int_{\Gamma_l}\sigma \dfrac{\partial u}{\partial n}.
\end{equation*}
But, since (\ref{YF:CR}) may be rewritten on $\partial \Omega_l$ as $\tfrac{\partial v}{\partial t} = \sigma \tfrac{\partial u}{\partial n}$ where $\tfrac{\partial}{\partial t}$ stands for the tangential partial derivative on $\partial \Omega_l$, it holds that 
\begin{equation*}  
-\int_{\Gamma_e}\sigma \dfrac{\partial u}{\partial n} = \int_{\Gamma_l} \dfrac{\partial v}{\partial t}.
\end{equation*}
Hence $v$ is clearly multiplied-valued if $\int_{\Gamma_e}\sigma \tfrac{\partial u}{\partial n} \neq 0$ and $f=u+iv$ \textit{a fortiori} too. However it is always possible to define $u$ as the real part of a single valued function $f$ in $\Omega_l$. See for example \cite{JLMP} where the holomorphic case is processed by the aid of the logarithmic function (well-known to be multiplied-valued in the unit disk). We only mention, without giving more details, that our situation may be similarly solved by making use of toroidal harmonics \cite{ARPI}.

\subsection{Generalized Hardy spaces}\label{YF:genHardySpaces}
Assume that $\nu \in W^{1,\infty}(\Omega_l;\mathbb{R})$ verifies (\ref{YF:condnu}). When handling boundary data in the fractional Sobolev space $W^{1/2,2}(\partial \Omega_l)$, it is a result from \cite{Cam} that the Dirichlet problem for (\ref{YF:CB}) admits a unique solution in $W^{1,2}(\Omega_l)$. But as mentioned before, the assumptions on the boundary data are relaxed in a manner that they now belong to the Lebesgue space $L^2(\partial \Omega_l)$. In this case, it is obvious that the solution of the Dirichlet problem for (\ref{YF:CB}) has no more reason to belong to $W^{1,2}(\Omega_l)$ in general. On the other hand the Dirichlet problem will rather have a solution in some generalized Hardy spaces whose definition will follow.

This focuses the attention on the fact that those generalized Hardy spaces are the natural spaces when trying to solve inverse problem linked to (\ref{YF:CB}) for $L^2$ boundary data (see \cite{FLPS} for the simply connected case). Moreover, in Section \ref{sec:BEP}, we will show that the Cauchy problem may be reformulated as a bounded extremal one admitting a unique solution in those appropriate Hardy classes.

\medskip
Let us denote by $\mathbb{D}_{R,\rho}$ and $\mathbb{T}_{R,\rho}$ the disk and the circle centered at $(R,0)$ of radius $\rho$. Remember that from now $\Omega_l$ stands for the annular domain shown in Figure \ref{FigYF2}. Thus we have $\mathbb{T}_{R,\rho_e} = \Gamma_e$.

\begin{definition}
If $\nu \in W^{1,\infty}(\Omega_l;\mathbb{R})$, we denote by $H^2_\nu(\Omega_l)$ the generalized Hardy space which consists in Lebesgue measurable functions $f$ on $\Omega_l$, solving (\ref{YF:CB}) in the sense of distributions in $\Omega_l$ and satisfying
\begin{equation}
\label{YF:normHnu}
\|f\|_{H^2_\nu(\Omega_l)} := \underset{\rho_l < \rho < \rho_e}{ess \sup} \ \|f\|_{L^2(\mathbb{T}_{R,\rho})}  < +\infty
\end{equation}
where
\begin{equation*}
\|f\|_{L^2(\mathbb{T}_{R,\rho})} := \left(\dfrac{1}{2\pi}\int_0^{2\pi} |f(R+\rho e^{i\theta})|^2 \ d\theta \right)^{1/2}
\end{equation*}
\end{definition}
Equipped with the norm defined by (\ref{YF:normHnu}), $H^2_\nu(\Omega_l)$ is a Hilbert space. Moreover (\ref{YF:normHnu}) implies $f \in L^2(\Omega_l)$.

We remark immediately that when $\nu =0$ and $\Omega = \mathbb{D} = \mathbb{D}_{0,1}$, $H^2_\nu(\Omega_l)$ is nothing but the classical $H^2(\Omega_l)$ space of holomorphic functions on the unit disk bounded in norm $L^2$ on $ \mathbb{T} = \mathbb{T}_{0,1}$ (see \cite{Duren,Gar}). Recall just that it consists in the functions $f \in L^2(\mathbb{T})$ which Fourier coefficients of negative order vanish. Most of the properties of generalized Hardy spaces on simply connected domains derive from those of the classical $H^2$ spaces and still hold for multiply connected domains. Basically, the main idea relies on the connection between functions $f \in H^2_\nu(\Omega_l)$ and functions $\omega$ satisfying in the distributional sense, for $\alpha \in L^{\infty}(\Omega_l)$,
\begin{equation}
\label{YF:omega}
\overline{\partial} \omega = \alpha \overline{\omega} \quad \text{on} \quad \Omega_l
\end{equation}
and such that the condition (\ref{YF:normHnu}) still holds. That connection has been introduced in \cite{BN} and leads to
\begin{proposition}
Let $\nu \in W^{1,\infty}(\Omega_l;\mathbb{R})$ satisfy (\ref{YF:condnu}) and define $\alpha \in L^{\infty}(\Omega_l)$ by
\begin{equation*}
\alpha = -\dfrac{\overline{\partial} \nu}{1-\nu^2} = \overline{\partial} \log \left[\dfrac{1-\nu}{1+\nu}\right]^{1/2} =  \overline{\partial} \log \sigma^{1/2}
\end{equation*}
then $f=u+iv \in H^2_\nu(\Omega_l)$ if and only if $\omega = \dfrac{f - \nu \overline{f}}{\sqrt{1-\nu^2}} = \sigma^{1/2}u + i\sigma^{-1/2}v$ solves (\ref{YF:omega}) with condition (\ref{YF:normHnu}).
\end{proposition}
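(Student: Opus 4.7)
The plan is to verify first that the three proposed expressions for $\alpha$ coincide, and then to compute $\overline{\partial}\omega$ directly from the definition of $\omega$ and the equation $\overline{\partial}f = \nu\overline{\partial f}$, showing it equals $\alpha\overline{\omega}$; the converse direction follows by inverting the linear map $f\mapsto \omega$, and the Hardy-norm condition is transported between $f$ and $\omega$ because they are pointwise comparable.

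\emph{Step 1 (equivalence of the three formulas for $\alpha$).} This is a direct chain-rule computation. Using $\sigma=(1-\nu)/(1+\nu)$ one gets $\log\sigma^{1/2}=\tfrac12[\log(1-\nu)-\log(1+\nu)]$, hence
\[
\overline{\partial}\log\sigma^{1/2}
=\tfrac12\Bigl(\tfrac{-\overline{\partial}\nu}{1-\nu}-\tfrac{\overline{\partial}\nu}{1+\nu}\Bigr)
=-\dfrac{\overline{\partial}\nu}{1-\nu^{2}},
\]
and the second equality is just the definition of $\sigma$. This requires only $\nu\in W^{1,\infty}(\Omega_l)$ with $\|\nu\|_\infty\le\kappa<1$, so that all denominators are bounded away from $0$ and the chain rule applies in the distributional sense.

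\emph{Step 2 (direct forward direction).} Assume $f\in H^2_\nu(\Omega_l)$, so that $\overline{\partial}f=\nu\,\overline{\partial f}$ in $\mathcal{D}'(\Omega_l)$. Rather than invoking the conjugate function $v$ (which might be multivalued on the doubly connected $\Omega_l$), I would work directly with $\omega=(f-\nu\overline{f})(1-\nu^2)^{-1/2}$. Applying Leibniz to the numerator and using $\overline{\partial}\,\overline{f}=\overline{\partial f}$,
\[
\overline{\partial}(f-\nu\overline{f})
=\overline{\partial}f-\overline{\partial}\nu\cdot\overline{f}-\nu\,\overline{\partial f}
=-\overline{\partial}\nu\cdot\overline{f},
\]
because the two terms in $\overline{\partial f}$ cancel by hypothesis. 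Combining with $\overline{\partial}(1-\nu^{2})^{-1/2}=\nu\,\overline{\partial}\nu\,(1-\nu^{2})^{-3/2}$ and simplifying the algebraic expression $\nu f-\overline{f}$ yields
\[
\overline{\partial}\omega
=\dfrac{-\overline{\partial}\nu}{(1-\nu^{2})^{3/2}}\bigl(\overline{f}-\nu f\bigr)
=\dfrac{-\overline{\partial}\nu}{1-\nu^{2}}\cdot\dfrac{\overline{f}-\nu f}{\sqrt{1-\nu^{2}}}
=\alpha\,\overline{\omega},
\]
which is (\ref{YF:omega}). That a real-imaginary split gives $\omega=\sigma^{1/2}u+i\sigma^{-1/2}v$ is then an elementary rewriting using $\sigma=(1-\nu)/(1+\nu)$.

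\emph{Step 3 (converse and norm equivalence).} The map $f\mapsto\omega$ is inverted by $f=(\omega+\nu\overline{\omega})(1-\nu^{2})^{-1/2}$, as a direct substitution shows. Repeating the Leibniz computation of Step 2 with the roles of $(f,\nu)$ and $(\omega,\alpha)$ interchanged, and using $\overline{\partial}\omega=\alpha\overline{\omega}$, yields $\overline{\partial}f=\nu\,\overline{\partial f}$. For the norm condition (\ref{YF:normHnu}), because $\|\nu\|_\infty\le\kappa<1$ and $\sigma$ satisfies (\ref{YF:condsigma}), there exist constants $c_1,c_2>0$ such that $c_1|f|\le|\omega|\le c_2|f|$ a.e.\ in $\Omega_l$; hence $\|f\|_{L^2(\mathbb{T}_{R,\rho})}$ and $\|\omega\|_{L^2(\mathbb{T}_{R,\rho})}$ are uniformly comparable in $\rho\in(\rho_l,\rho_e)$, so one is essentially bounded in $\rho$ iff the other is.

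\emph{Main obstacle.} The only real subtlety is justifying the calculus of Step 2 at the low regularity imposed by $H^2_\nu$: neither $f$ nor $\omega$ is a priori in $W^{1,2}(\Omega_l)$. However, the hypothesis $\nu\in W^{1,\infty}$ guarantees that the products $\nu\overline{f}$, $\overline{\partial}\nu\cdot\overline{f}$ and $(1-\nu^{2})^{\pm1/2}\cdot f$ are well defined as distributions whose $\overline{\partial}$ can be computed by the Leibniz rule; this is the standard framework of \cite{BN} for equations of type (\ref{YF:omega}), and is precisely what makes the identification $f\in H^2_\nu\Leftrightarrow\omega$ solves (\ref{YF:omega}) with (\ref{YF:normHnu}) a meaningful distributional statement.
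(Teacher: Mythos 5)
Your proof is correct and follows essentially the same route as the paper's: a direct Leibniz computation of $\overline{\partial}\omega$ from $\omega=(f-\nu\overline{f})(1-\nu^2)^{-1/2}$, cancellation of the $\overline{\partial f}$ terms via the conjugate Beltrami equation, and the converse by the inverse substitution $f=(\omega+\nu\overline{\omega})(1-\nu^2)^{-1/2}$. Your Step~1 (checking the three expressions for $\alpha$ agree) and the explicit pointwise comparability $c_1|f|\leq|\omega|\leq c_2|f|$ for transporting condition (\ref{YF:normHnu}) are welcome elaborations of points the paper leaves implicit, but they do not change the argument.
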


\begin{proof}
The proof is a straightforward computation. Indeed assume $f \in H^2_\nu(\Omega_l)$. Then from $\omega = \tfrac{f - \nu \overline{f}}{\sqrt{1-\nu^2}}$ we get
\begin{equation*}
\overline{\partial} \omega = \dfrac{\nu \overline{\partial} \nu}{(1-\nu^2)^{3/2}} (f-\nu \overline{f}) + \dfrac{1}{(1-\nu^2)^{1/2}} (\overline{\partial} f - \nu \overline{\partial f} - \overline{f} \ \overline{\partial} \nu).
\end{equation*}
Insofar as $f \in H^2_\nu(\Omega_l)$, it is true that $\overline{\partial} f - \nu \overline{\partial f} =0$. Hence
\begin{equation*}
\begin{split}
\overline{\partial} \omega =& \dfrac{\nu \overline{\partial} \nu}{(1-\nu^2)^{3/2}} (f-\nu \overline{f}) - \dfrac{1}{(1-\nu^2)^{1/2}} \overline{f} \  \overline{\partial} \nu \\
=& - \dfrac{\overline{\partial} \nu}{(1-\nu^2)^{3/2}}(\overline{f} - \nu f)  = - \dfrac{\overline{\partial} \nu}{1-\nu^2} \omega,
\end{split}
\end{equation*}
in other words $\overline{\partial} \omega = \alpha \overline{\omega}$. From the definition of $\omega$, it is obvious that (\ref{YF:normHnu}) still holds for this latter. By the same way, and noticing that $f = \dfrac{\omega + \nu  \ \overline{\omega}}{\sqrt{1-\nu^2}}$, the converse is valid.
\end{proof}

Furthermore solutions of type $\omega$, which satisfy condition (\ref{YF:normHnu}), can be represented as $\omega = e^s g$ where $s$ is continuous on $\overline{\Omega_l}$ and $g \in H^2(\Omega_l)$ \cite{BLRR,BN}. This remark allows to establish that on simply connected domains, classical and generalized Hardy spaces share similar properties and those latter are naturally exportable to multiply connected domains. We recall in the following result most of them

\begin{proposition}
\label{YF:prophardy}
If $\nu \in W^{1,\infty}(\Omega_l;\mathbb{R})$ satisfies (\ref{YF:condnu}),
\begin{enumerate}
\item any function $f \in H^2_\nu(\Omega_l)$ has a non-tangential limit almost everywhere on $\partial \Omega_l = \Gamma_e \cup \Gamma_l$, called the trace of $f$ and denoted by $tr f$, which belongs to $L^2(\partial \Omega_l)$;

\medskip
\item $\|tr f\|_{L^2(\partial \Omega_l)}$ defines an equivalent norm on $H^2_\nu(\Omega_l)$;

\medskip
\item $tr H^2_\nu(\Omega_l)$ is closed in $L^2(\partial \Omega_l)$;

\medskip
\item if $f \in H^2_\nu(\Omega_l)$, $tr f$ cannot vanish on a subset of $\partial \Omega_l$ with positive measure unless $f \equiv 0$;

\medskip
\item each $f \in H^2_\nu(\Omega_l)$ satisfies the maximum principle, i.e $|f|$ cannot assume a relative maximum in $\Omega_l$ unless it is constant.
\end{enumerate}

\end{proposition}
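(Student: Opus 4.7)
The strategy is to transfer each of the five classical properties of the holomorphic Hardy space $H^2(\Omega_l)$ to $H^2_\nu(\Omega_l)$ through the two bridges already set up in the text: the $\mathbb{R}$-linear bijection $f\leftrightarrow \omega$ of the previous proposition, and the Bers-type factorization $\omega = e^s g$ with $s\in C(\overline{\Omega_l})$ and $g\in H^2(\Omega_l)$ recalled just before the statement. The whole argument rests on two boundedness facts. First, by assumption (\ref{YF:condnu}), both $\omega\mapsto f=(\omega+\nu\overline\omega)/\sqrt{1-\nu^2}$ and its inverse $f\mapsto\omega=(f-\nu\overline f)/\sqrt{1-\nu^2}$ are bounded $\mathbb{R}$-linear maps on $L^2$ of any circle $\mathbb{T}_{R,\rho}\subset\Omega_l$ and on $L^2(\partial\Omega_l)$, with constants depending only on $\kappa$. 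Second, continuity of $s$ on the compact $\overline{\Omega_l}$ makes $e^{\pm\operatorname{Re} s}$ bounded above and below, so multiplication by $e^s$ and $e^{-s}$ preserves circle and boundary $L^2$-norms up to universal constants.

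For (1) and (2) I would proceed from $g$ upward. Classical Hardy theory on the doubly connected domain $\Omega_l$ provides a.e.\ non-tangential limits for any $g\in H^2(\Omega_l)$ and the equivalence $\|g\|_{H^2(\Omega_l)}\asymp\|tr\,g\|_{L^2(\partial\Omega_l)}$. The continuous factor $e^s$ transports both statements to $\omega$, and the bounded $\mathbb{R}$-linear combination then transports them to $f$; in particular $tr\,f\in L^2(\partial\Omega_l)$ and $\|f\|_{H^2_\nu(\Omega_l)}\asymp\|tr\,f\|_{L^2(\partial\Omega_l)}$. For (3), I would use completeness: if $tr\,f_n\to h$ in $L^2(\partial\Omega_l)$, then by (2) the sequence $\{f_n\}$ is Cauchy in the Hilbert space $H^2_\nu(\Omega_l)$, hence converges to some $f\in H^2_\nu(\Omega_l)$; applying (2) once more yields $tr\,f_n\to tr\,f$ in $L^2(\partial\Omega_l)$, so $h=tr\,f$ and $tr\,H^2_\nu(\Omega_l)$ is closed.

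For (4) I would descend back to $g$. The pointwise invertibility of the correspondence $tr\,f\leftrightarrow tr\,\omega \leftrightarrow tr\,g$ (the last step using $e^{\pm s}\neq 0$) shows that $tr\,g$ vanishes on the same set, up to a null set, where $tr\,f$ does; the classical boundary-zero rigidity for $H^2(\Omega_l)$ on a doubly connected domain (inner-outer factorization / F.\ and M.\ Riesz) then forces $g\equiv 0$, hence $\omega\equiv 0$ and $f\equiv 0$. For (5), my plan is to invoke Bers' similarity principle directly for the conjugate Beltrami equation $\overline\partial f = \nu\overline{\partial f}$: one writes $f = e^{\chi}\,F\circ\Phi$ with $\chi$ continuous and bounded, $\Phi$ a quasiconformal homeomorphism, and $F$ holomorphic; the open mapping theorem applied to $F$ then yields the strong maximum principle for $|f|$.

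The main obstacle I expect is (5). The seemingly natural route, chaining $|\omega|=e^{\operatorname{Re} s}|g|$ with the max principle for $|g|$, does not close because $\operatorname{Re} s$ is only continuous rather than harmonic, so $|\omega|$ need not be log-subharmonic; and the two-sided pointwise comparison $\sqrt{(1-\kappa)/(1+\kappa)}\,|\omega|\leq|f|\leq\sqrt{(1+\kappa)/(1-\kappa)}\,|\omega|$ coming from (\ref{YF:condnu}) is not enough to transport a maximum principle, since comparability with bounded ratio does not preserve the property of not attaining a local maximum. The cleanest way around both issues is to bypass $\omega$ and $g$ in this one item and apply the similarity principle directly to $f$, treating it as a pseudoanalytic function in the sense of Bers-Vekua. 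A subsidiary check needed in (4) is that the $\mathbb{R}$-linear (not $\mathbb{C}$-linear) nature of the $f\leftrightarrow\omega$ correspondence still preserves zero sets, which follows from the fact that the associated real $2\times 2$ matrix has determinant bounded below by a positive constant on account of $\|\nu\|_\infty\leq\kappa<1$.
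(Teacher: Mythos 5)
Your overall strategy --- transferring the five properties from the classical Hardy space $H^2(\Omega_l)$ to $H^2_\nu(\Omega_l)$ through the $\mathbb{R}$-linear correspondence $f\leftrightarrow\omega$ and the factorization $\omega=e^sg$ --- is exactly the one the paper intends: the authors give no proof at all beyond the remark preceding the statement and the citation to \cite{BLRR}, where this transfer is carried out. Your treatment of items (1)--(4) is sound: the two boundedness facts you isolate (uniform pointwise invertibility of the $\mathbb{R}$-linear matrix because $\|\nu\|_{L^\infty}\leq\kappa<1$, and the two-sided bounds on $e^{\operatorname{Re}s}$ coming from continuity of $s$ on the compact $\overline{\Omega_l}$) do transport nontangential limits, the norm equivalence, and boundary zero sets, and item (3) is a formal consequence of (2) together with the completeness of $H^2_\nu(\Omega_l)$.

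Item (5), however, is not repaired by the fix you propose. You correctly diagnose why $|\omega|=e^{\operatorname{Re}s}|g|$ does not inherit the strong maximum principle from $|g|$ when $s$ is merely continuous; but the factorization you then invoke, $f=e^{\chi}\,F\circ\Phi$ with $\chi$ continuous, suffers from the identical defect: openness of $F\circ\Phi$ gives no control on relative maxima of $e^{\operatorname{Re}\chi}\,|F\circ\Phi|$. Moreover, the Bers--Vekua similarity principle concerns equations of the form $\overline{\partial}w=aw+b\overline{w}$; the conjugate Beltrami equation involves $\overline{\partial f}$ rather than $\overline{f}$, so it is not a Vekua equation for $f$ itself (it becomes one only for $\omega$, which sends you back to the route you rejected). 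The correct observation is more elementary: (\ref{YF:CB}) gives $|\overline{\partial}f|=|\nu|\,|\overline{\partial f}|=|\nu|\,|\partial f|\leq\kappa\,|\partial f|$ a.e., so $f$ --- which lies in $W^{1,2}_{\textrm{loc}}(\Omega_l)$ by interior elliptic regularity of its real and imaginary parts --- is $K$-quasiregular with $K=(1+\kappa)/(1-\kappa)$. The Sto\"{\i}low factorization then reads $f=F\circ\Phi$ with \emph{no} exponential factor, $F$ holomorphic and $\Phi$ a quasiconformal homeomorphism; a nonconstant $f$ is therefore an open map, and $|f|$ can have no interior relative maximum. With this replacement your argument for (5) closes, and the rest of the proposal stands.
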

We refer to \cite{BLRR} for the proofs in simply connected domains.

\medskip
These properties are necessary to prove the results of next section, i.e. the density of traces of function in $H^2_\nu(\Omega_l)$ in $L^2(I)$ whenever $I$ is a subset of non-full measure of the boundary $\partial \Omega_l = \Gamma_e \cup \Gamma_l$, which are the key point to solve extremal problems with incomplete boundary data.

\subsection{Density results and bounded extremal problem} \label{sec:BEP}

This section is devoted to the resolution of the Cauchy problem formulated in the introduction.The first step which needs to be established is a density result of fundamental importance which asserts that if $I \subset \partial \Omega_l$ has positive measure as well as $J = \partial \Omega_l \setminus I$, then every $L^2$-complex function on $I$ can be approximated by the trace of a function in $H^2_\nu(\Omega_l)$. Remember that in the situation under study in this paper, the subset $I$ of the boundary $\partial \Omega_l$ corresponds \textit{a priori} to the whole boundary $\Gamma_e$. But in order to ensure density results as well as better algorithm's convergence (see the end of Section \ref{sec:BEP}), we restrict from now on the set of available magnetic data to a strict subset, still named  $I$ of the outer boundary $\Gamma_e$. We mention that the case where $I$ is effectively the whole boundary $\Gamma_e$ is already under study. 

Thus the following result is a natural extension of \cite[Theorem 4.5.2.1]{BLRR} for an annular domain.

\begin{theorem}
\label{YF:thdense}
Let $I \subset \partial \Omega_l = \Gamma_e \cup \Gamma_l$ be a measurable subset such that $J = \partial \Omega_l \setminus I$ has positive Lebesgue measure. The restrictions to $I$ of traces of $H^2_\nu$-functions are dense in $L^2(I)$.
\end{theorem}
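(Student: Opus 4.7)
The plan is to adapt the Hahn--Banach duality argument used for simply connected domains in \cite[Thm.~4.5.2.1]{BLRR}, combined with the bijective correspondence $f \mapsto \omega = (f-\nu\overline{f})/\sqrt{1-\nu^2}$ established in the preceding proposition. This reduces the statement to a density question for solutions of (\ref{YF:omega}) and, through the similarity principle recalled between that proposition and Proposition~\ref{YF:prophardy}, ultimately to a classical Hardy-space problem on the annulus.

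First, I would argue by contradiction: if the restrictions to $I$ of traces of $H^2_\nu(\Omega_l)$ are not dense in $L^2(I)$, then the Riesz representation theorem furnishes a nonzero $g \in L^2(I)$, which I extend by $0$ on $J$ to obtain $g \in L^2(\partial\Omega_l)$ satisfying
\[
\int_{\partial\Omega_l}(\operatorname{tr} f)\,\overline{g}\,\d s \;=\; 0 \qquad \text{for every } f \in H^2_\nu(\Omega_l).
\]
By construction $g$ vanishes on the positive-measure set $J$, so the strategy is to show that $g$ itself is, up to a nonvanishing bounded weight, the trace of some function $F$ in an appropriate companion generalized Hardy class; then item~4 of Proposition~\ref{YF:prophardy} applied to $F$ will force $F \equiv 0$ and hence $g \equiv 0$, the desired contradiction.

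The core step is the construction of $F$. Applying the factorization $\omega = e^s h$ with $s$ continuous on $\overline{\Omega_l}$ and $h \in H^2(\Omega_l)$ (the classical Hardy space of the annulus), and noting that $e^s$ is bounded above and below, the orthogonality relation translates into an annihilation condition of the form $\int_{\partial\Omega_l}(\operatorname{tr} h)\,\overline{\tilde g}\,\d s = 0$ for every $h \in H^2(\Omega_l)$, with $\tilde g \in L^2(\partial\Omega_l)$ still vanishing on $J$. Decomposing $H^2(\Omega_l)$ via Laurent series into the direct sum $H^2(\mathbb{D}_{R,\rho_e}) \oplus H^2_0(\mathbb{C}\setminus\overline{\mathbb{D}_{R,\rho_l}})$ and applying the classical Cauchy duality (the $H^2$-orthogonal complement is the corresponding Hardy space on the complementary domain) identifies $\tilde g$ as the boundary trace of a holomorphic function on each complementary component. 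Unwinding the similarity $h \mapsto \omega \mapsto f$ with the sign conventions of the proposition yields $F$ in the generalized Hardy class associated with the Beltrami coefficient of opposite sign, whose trace coincides with $g$ up to a nonvanishing weight.

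The main obstacle I expect lies in the doubly connected geometry of $\Omega_l$, which rules out any direct use of inner--outer factorization as in the simply connected case of \cite{BLRR}: one must handle the two boundary components $\Gamma_e$ and $\Gamma_l$ simultaneously, control the possibly multi-valued conjugate function $v$ highlighted after (\ref{YF:CR}), and verify that the period condition $\int_{\Gamma_e}\sigma\,\partial_n u\,\d s = -\int_{\Gamma_l}\sigma\,\partial_n u\,\d s$ is compatible with the Laurent splitting of $H^2(\Omega_l)$. Reconciling this period condition with the single-valued similarity exponent $s$ near each component of $\partial\Omega_l$ is where the bulk of the technical work will sit; everything else should follow the simply connected template of \cite{BLRR} essentially verbatim.
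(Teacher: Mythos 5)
You should first note that the paper does not actually prove Theorem~\ref{YF:thdense}: it states it as ``a natural extension of \cite[Theorem 4.5.2.1]{BLRR} for an annular domain'' and moves on, so there is no in-paper argument to compare against. Your overall architecture --- duality, extension of the annihilator by zero on $J$, identification of that annihilator with the trace of a function in the companion class $H^2_{-\nu}(\Omega_l)$, and conclusion via item~4 of Proposition~\ref{YF:prophardy} --- is the right one and is indeed the skeleton of the simply connected proof in \cite{BLRR}.

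There is, however, a genuine gap in your core step. The Bers--Nirenberg representation $\omega = e^{s}h$ is a function-by-function statement: the exponent $s$ is built from the particular solution (it is a Cauchy-type transform of $\alpha\,\overline{\omega}/\omega$), so $e^{-s}$ is \emph{not} a fixed weight, the map $\omega\mapsto h$ is not linear, and its image is not all of $H^2(\Omega_l)$ against a common $s$. Hence the orthogonality relation for $\mathrm{tr}\,H^2_\nu(\Omega_l)$ cannot be ``translated'' into $\int_{\partial\Omega_l}(\mathrm{tr}\,h)\,\overline{\tilde g}\,ds=0$ for every $h\in H^2(\Omega_l)$, and the Laurent splitting plus classical Cauchy duality cannot be invoked for $\tilde g$ as you propose. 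The similarity principle is the right tool only at the very end, to prove the boundary uniqueness property for the \emph{single} annihilating function $F$; the identification of the annihilator itself must instead go through a Green/Stokes bilinear identity pairing solutions of $\overline{\partial}f=\nu\overline{\partial f}$ with solutions of $\overline{\partial}g=-\nu\overline{\partial g}$ (this is how \cite{BLRR} parametrizes $(\mathrm{tr}\,H^2_\nu)^{\perp}$, and it is also where the period obstruction you flag after \eqref{YF:CR} must be absorbed in the doubly connected setting). A second, smaller inaccuracy: $H^2_\nu(\Omega_l)$ is only an $\mathbb{R}$-vector space (if $f$ solves \eqref{YF:CB} then $if$ solves it with $-\nu$), so the Hahn--Banach/Riesz step yields only $\mathrm{Re}\int_{\partial\Omega_l}(\mathrm{tr}\,f)\,\overline{g}\,ds=0$; the full complex orthogonality you wrote is more than the contradiction hypothesis delivers, and the real pairing is what makes the $-\nu$ class appear naturally.
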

From the magnetic data on $I$, one can compose the function $F_d = u + i v$ where $v = \int_{\Gamma_e} \sigma \tfrac{\partial u}{\partial n}$ according to the formulation of (\ref{YF:CR}) on $\partial \Omega_l$. As a consequence of Theorem \ref{YF:thdense}, it exists a sequence of functions $(f_n)_{n \geq 0} \in H^2_\nu(\Omega_l)$ which trace on $I$ converges to $F_d$ in $L^2(I)$. Suppose now that $\underset{n \to +\infty}{\lim} \|tr f_n\|_{L^2(J)} \neq +\infty$. Up to extracting a subsequence, we may assume that $(tr f_n)_{n \geq 0}$ is bounded in $L^2(J)$. But $(tr f_n)_{n \geq 0}$ is already bounded in $L^2(I)$ by assumption. Consequently it remains bounded in $L^2(\partial \Omega_l)$ and then in $H^2_\nu(\Omega_l)$ by point $2$ of Proposition \ref{YF:prophardy}. As $H^2_\nu(\Omega_l)$ is a Hilbert space, there exists a subsequence $(tr f_{n_p})_{p \geq 0}$ converging weakly toward some $f \in H^2_\nu(\Omega_l)$. Now, by restriction, it is obvious that $(tr {f_{n_p}}_{|I})_{p \geq 0}$ converges weakly to $f_{|I}$ in $L^2(I)$. Since $F_d$ is the strong limit of $(tr f_n)_{n \geq 0}$ in $L^2(I)$, we conclude that $F_d = f$ a.e on $I$. For this reason $F_d$ can either already be the trace on $I$ of a $H^2_\nu$-function, or $\|tr f_n\|_{L^2(J)} \to +\infty$ as $n \to +\infty$. This behaviour of the approximant on $J$ shows that the Cauchy solution is unstable, which in fact explains that the inverse problem is ill-posed. Note that in general the function $F_d$ does not coincide with the trace of a function belonging to $H^2_\nu(\Omega_l)$ insofar as magnetic data proceeds from sensor subject, like for all mechanical engineering, to roundoff errors.

Therefore, in order to prevent such an unstable behaviour, an upper bound for the $L^2$ norm of the approximation on $J$ will be added. Thus the Cauchy problem may be expressed as a bounded extremal one in appropriate Hardy classes. In practical terms, define, for $M > 0$ and $\phi \in L^2(J)$,
\begin{equation*}
\mathcal{B}_{\Omega_l} = \left\{ g \in tr H^2_\nu(\Omega_l); \|Re \, g - \phi\|_{L^2(J)} \leq M \right\}_{|I} \subset L^2(I).
\end{equation*}
In light of this definition, it is now possible to find a unique solution for the minimization problem formulated on the class $\mathcal{B}_{\Omega_l}$.
\begin{theorem}
\label{YF:thBEP}
Fix $M > 0, \phi \in L^2(J)$. For every function $F_d \in L^2(I)$, there exists a unique solution $g^* \in \mathcal{B}_{\Omega_l}$ such that
\begin{equation*}
\|F_d - g^*\|_{L^2(I)} = \underset{g \in \mathcal{B}_{\Omega_l}}{\min} \|F_d - g\|_{L^2(I)}.\\
\end{equation*}
Moreover, if $F_d \notin \mathcal{B}_{\Omega_l}$, the constraint is saturated, i.e $\|Re \, g^* - \phi\|_{L^2(J)} = M$.
\end{theorem}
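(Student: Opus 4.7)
The plan is to apply the projection theorem on a non-empty, closed and convex subset of the Hilbert space $L^2(I)$. First I would verify that $\mathcal{B}_{\Omega_l}$ is non-empty (which follows from Theorem~\ref{YF:thdense}, since the restrictions to $J$ of $H^2_\nu$-traces are dense in $L^2(J)$, so at least one such trace has real part lying within $M$ of $\phi$) and convex (immediate from the linearity of the trace, of $Re$ and of the restriction map, and from convexity of an $L^2$-ball).

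The main obstacle will be the closedness of $\mathcal{B}_{\Omega_l}$ in $L^2(I)$. Given a sequence $(g_n) \subset \mathcal{B}_{\Omega_l}$ converging to $g$ in $L^2(I)$, with $g_n = tr f_n|_I$ and $f_n \in H^2_\nu(\Omega_l)$, the strategy is to extract a weakly convergent subsequence of $(f_n)$ and pass to the limit. Point~3 of Proposition~\ref{YF:prophardy} ensures that the weak limit remains in $H^2_\nu(\Omega_l)$, so the substantive task is to obtain a uniform bound on $\|f_n\|_{H^2_\nu(\Omega_l)}$; by the norm equivalence of point~2, this reduces to bounding $\|tr f_n\|_{L^2(\partial \Omega_l)}$. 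The bound on $I$ comes from convergence, and the bound on $Re\, tr f_n$ on $J$ from the constraint and the triangle inequality. The delicate piece is the imaginary part on $J$, which is not constrained directly but has to be recovered from the Cauchy-Riemann-type system (\ref{YF:CR}), which determines $v$ from $u$ up to at most two real additive constants in our doubly-connected setting; this is the generalized-Hardy analogue of the $L^2(\mathbb{T})$-boundedness of the Hilbert transform, and it is where I expect most of the work to lie. Once this bound is in hand, weak continuity of the trace yields $g = tr f^*|_I$ for some $f^* \in H^2_\nu(\Omega_l)$, and weak lower semi-continuity of $\|\cdot\|_{L^2(J)}$ preserves the real-part constraint, so $g \in \mathcal{B}_{\Omega_l}$. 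Existence and uniqueness of $g^*$ then follow from the classical projection theorem together with the strict convexity of the $L^2(I)$-norm.

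For the saturation claim, I would argue by contradiction. Suppose $F_d \notin \mathcal{B}_{\Omega_l}$ (so in particular $g^* \neq F_d$ in $L^2(I)$) and $\|Re\, g^* - \phi\|_{L^2(J)} = M - \delta$ for some $\delta > 0$. Using Theorem~\ref{YF:thdense}, pick $\tilde g = tr \tilde f|_I$ arbitrarily close to $F_d$ in $L^2(I)$, and consider the convex combination $g_t := (1-t) g^* + t \tilde g = tr((1-t) f^* + t \tilde f)|_I$, where $f^*$ is the $H^2_\nu$-extension of $g^*$. For $t > 0$ small enough (depending on $\delta$ and on $\|Re\, tr \tilde f - \phi\|_{L^2(J)}$), the triangle inequality keeps the real part within $M$ of $\phi$ on $J$, so $g_t \in \mathcal{B}_{\Omega_l}$. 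The first-order expansion of $\|F_d - g_t\|_{L^2(I)}^2$ at $t = 0$ equals $-2 t \langle F_d - g^*, \tilde g - g^* \rangle_{L^2(I)}$, and this inner product is close to $\|F_d - g^*\|_{L^2(I)}^2 > 0$ when $\tilde g$ has been chosen close enough to $F_d$. Hence for small $t > 0$, $\|F_d - g_t\|_{L^2(I)} < \|F_d - g^*\|_{L^2(I)}$, contradicting the minimality of $g^*$ and forcing $\|Re\, g^* - \phi\|_{L^2(J)} = M$.
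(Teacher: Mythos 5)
Your proposal is correct and follows essentially the same route as the paper: closedness of $\mathcal{B}_{\Omega_l}$ via a uniform trace bound followed by weak compactness and the weak closedness of $tr\, H^2_\nu(\Omega_l)$, then the projection theorem on a closed convex subset of $L^2(I)$, and the same density-plus-small-convex-perturbation argument for saturation. The one step you single out as ``where most of the work lies'' --- controlling the imaginary part of the trace on $J$ by the real part --- is precisely what the paper imports as the norm inequality $\|\varphi_k\|_{H^2_\nu(\Omega_l)}\leq c_\nu \|Re\,\varphi_k\|_{L^2(\partial\Omega_l)}$ from \cite[Theorem 4.4.2.1]{BLRR}, so no new argument is required there.
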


\begin{proof}
Since $\mathcal{B}_{\Omega_l}$ is clearly convex and the norm $\|.\|_{L^2(\partial \Omega_l)}$ lower semi-continuous, it is enough to show that $\mathcal{B}_{\Omega_l}$ is closed in $L^2(I)$ to ensure the existence and the uniqueness of $g^*$. Indeed, there is a best approximation on any closed convex subset of a uniformly convex Banach space which is in this case $L^2(I)$ \cite[Part 3, Chapter II, 1, Propositions 5 and 8]{Beau}.

Let $(\varphi_{k|_{I}})_{k \geq 0} \in \mathcal{B}_{\Omega_l}$, $\varphi_{k|_{I}} \to \varphi_{I}$ in $L^2(I)$ as $k \to \infty$. 
Put $u_k = Re \, \varphi_k$. By assumption, $(u_k)_{k \geq 0}$ is bounded in $L^2(\partial \Omega_l)$. Then the application of norm's inequalities contained in \cite[Theorem 4.4.2.1]{BLRR} shows that $\|\varphi_k\|_{H^2_\nu(\Omega_l)} \leq c_\nu \|u_k\|_{L^2(\partial \Omega_l)}$. Hence $(\varphi_k)_{k \geq 0}$ is bounded in $L^2(\partial \Omega_l)$ and, up to extracting a subsequence, weakly converges to $\psi \in L^2(\Omega_l)$; 
necessarily 
$\psi_{|_{I}} = \varphi_{I}$. 

Next, $\varphi_k \in tr H^{2}_\nu(\Omega_l)$, which is weakly closed in $L^2(\partial \Omega_l)$ (see discussion after Theorem \ref{YF:thdense}); this implies that $\psi \in tr H^{2}_\nu(\Omega_l)$. Because $\mbox{Re } \varphi_k = u_k$ satisfies the constraint on $J$, so does $Re \, \psi$, whence $\varphi_{I}  \in \mathcal{B}_{\Omega_l}$. 

Let us now prove that, if $F_d\notin \mathcal{B}_{\Omega_l}$, then 
$\left\Vert Re \, g^* - \phi\right\Vert_{L^{2}(J)}=M$. 
Assume for a 
contradiction that $\left\Vert Re \, g^* - \phi\right\Vert_{L^{2}(J)}<M$. Since $\|F_d-g^*\|_{L^2(I)} > 0$, by Theorem \ref{YF:thdense}, there is a function $h\in tr H^2_\nu(\Omega_l)$ such that 
\begin{equation*}
\|F_d-g^*- h\|_{L^2(I)}<\|F_d-g^*\|_{L^2(I)}.
\end{equation*}
By making use of the triangle inequality,
\begin{equation*}
\begin{split}
\|F_d-g^*-\lambda h\|_{L^2(I)} =& \|\lambda(F_d-g^*- h) + (1-\lambda)(F_d-g^*)\|_{L^2(I)} \\
\leq& |\lambda|\|F_d-g^*- h\|_{L^2(I)} + |1-\lambda|\|F_d-g^*\|_{L^2(I)} \\
<&  \|F_d-g^*\|_{L^2(I)}
\end{split}
\end{equation*}
for all $\lambda$ such that $0 < \lambda < 1$. Now, taking advantage of the assumption, we have that for $\lambda>0$ sufficiently small $\|Re \, (g^* + \lambda  h )- \phi \|_{L^2(J)} \le M$. To recap, $g^* + \lambda  h \in \mathcal{B}_{\Omega_l}$ and $ \|F_d-(g^*+\lambda h)\|_{L^2(I_1)} < \|F_d-g^*\|_{L^2(I_1)}$. This contradicts the optimality of $g^*$. 
\end{proof}
An explicit formulation of $g^*$ has been established in \cite{ABL} in the case $\nu=0$ which is still valid in the present situation. Indeed, denote by $\mathcal{P}_\nu$ the orthogonal projection from $L^2(\partial \Omega_l)$ onto $tr H^{2}_\nu(\Omega_l)$ (the operator $\mathcal{P}_\nu$ is the natural extension of the classical Riesz projection from $L^2(\mathbb{T})$ onto $tr H^{2} (\mathbb{D})$ \cite{Duren}). The unique solution of the bounded extremal problem, when $F_d \notin \mathcal{B}_{\Omega_l}$, is given by
\begin{equation} 
\mathcal{P}_\nu(\chi_{I_1} g^*) + \lambda \mathcal{P}_\nu(\chi_{J} Re \, g^*) = \mathcal{P}_\nu(\chi_{I_1} F_d) + \lambda \mathcal{P}_\nu(\chi_{J} \phi)
\end{equation}
where $\lambda > 0$ is the unique (Lagrange-type) parameter such that the constraint on $J$ is saturated, in other words $\|Re \, g^* - \phi\|_{L^2(J)} = M$ and $\chi_{J}$ is the characteristic function of $J$. Otherwise, the case where $F_d \in \mathcal{B}_{\Omega_l}$ corresponds to $\lambda = -1$. The behaviour with respect to $\lambda$ of the error $e(\lambda) = \|F_d-g^*\|^2_{L^2(I_1)}$  and the constraint $M$ is discussed in \cite{ABL}.

Observe that $g^*$ may be computed constructively if a complete family of $tr H^{2}_\nu(\Omega_l)$ is known. Indeed, the operator $\mathcal{P}_\nu$ can be expressed thanks to the conjugation operator which, for every function in $H^{2}_\nu(\Omega_l)$, associates the trace of $Re \, f$ to the trace of $Im \, f$. A formula for this latter may be found in \cite{AP}. Thus it is sufficient to expand any function in $L^2(\partial \Omega_l)$ on a basis for which the conjugation operator is easily computed. This method is described in \cite{FLPS} for simply connected domains.

\medskip
Finally we briefly describe a algorithm for solving numerically the bounded extremal problem. It has been developed in \cite{CBL} and consists in taking advantage of the fact that initially the magnetic data are available on the entire outer boundary $\Gamma_e$. As a first step compute the function $F_d$ thank the magnetic data on $\Gamma_e$ (as it is explained just after Theorem \ref{YF:thdense}) and split the measurement set into two disjoint parts $\Gamma_e = I_1 \cup I_2$. Secondly, solve the bounded extremal problem with respect to $\chi_{I_1}F_d$ and a constraint $M_1 > 0$. One obtain a solution $g^*_1$ which depends of $M_1$. From this latter compute the real number $M_2 = \underset{M_1 > 0}{\text{Argmin}} \ \|g^*_1 - F_d\|_{L^2(I_2)}$. And lastly solve the bounded extremal problem with respect to $\chi_{I_1}F_d$ and the constraint $M_2 > 0$. This cross validation procedure provides good results in the harmonic case, i.e. for the Laplace's equation. 

\medskip
We conclude this section by claiming that the solution $g^*$ of the bounded extremal problem could be a starting point of the next section which focuses on the recovery of the plasma boundary by technics of Shape Optimization. Indeed $g^*$ provides magnetic data, i.e. $u$ and $\partial_n u$, at every point of $\Omega_l$. It is then possible to initiate the free boundary problem with an outer boundary $\Gamma_E$ located in $\Omega_l$, which is then closer to the potential boundary $\Gamma_p$ of the plasma. Moreover this new boundary $\Gamma_E$ can still be $\Gamma_l$ itself.

An other feature, under the assumption that the point of contact $M_0$ between the plasma and the limiter is known in advance, is to provide the constant $c = g^*(M_0)$ corresponding to the value of the flux at this point.

\section{Recovering the shape of the plasma inside the tokamak \textsl{Tore Supra}} \label{sec:shape} 
\subsection{A Shape Optimization frame}
\par In this section we make use of the notations introduced on Figure \ref{FigYF2}. In particular, $\Gamma_e$ denotes the external boundary of the disk centered at $(r,z)=(R,0)$ with given radius $\rho_e >0$, representing the external wall of the tokamak \textsl{Tore Supra}. We are interested in the determination of the shape of the plasma inside the tokamak. In the following analysis, we will assume that the plasma is composed of one simply connected component, whose boundary is denoted $\Gamma_p$. Let us introduce $\Omega$, the domain located between the two closed curves $\Gamma_e$ and $\Gamma_p$. Physically, as it is underlined in Section \ref{sec:defplasmabound}, the only interesting case for our study is the limiter one. It means that there exists a point of the plasma boundary denoted by $M_0$, where the plasma meets the toroidal belt limiter $\Gamma_p$ and the poloidal magnetic flux is maximal. 
\par We have seen in Section \ref{sec:defplasmabound} that the boundary $\Gamma_p$ may be seen as a level set of the solution. In other words, we look for a domain $\Omega$ and its boundary $\Gamma_p=\partial\Omega\backslash\Gamma_e$ such that
\begin{equation}\label{eq0}
\left\{
\begin{array}{ll}
\nabla \cdot \left(\sigma \nabla u \right)=0, & x\in \Omega\\
u=u_0, & x\in \Gamma_e \\
\frac{\partial u}{\partial n}=u_1, & x\in \Gamma_e\\
\Gamma_p=\{u=u(M_0)\}.
\end{array}\right.
\end{equation}
where $u$ still denotes the poloidal magnetic flux inside the tokamak, $\nabla$ denotes the \textsl{nabla} operator with respect to the variables $(r,z)$, and $\sigma$ is a given analytic function such that $0<C_1<\sigma<C_2$ in $\Omega$. We recall that $M_0$ is such that $u(M_0)=\displaystyle \max_{x\in \overline{\Omega}}u(x)$ ($M_0$ is not unique {\it a priori}). Indeed, let us recall that in the case of the tokamak {\sl Tore Supra}, one has (see Section \ref{YF:genHardySpaces})
$$
\sigma(r,z)=\frac{1}{r}\textrm{ and }\Omega \subset \mathbb{D}_{R,\rho}.
$$
\par Although a refined study has been led in Section \ref{sec:magdata} to obtain explicit weak solutions to \eqref{YF:CB}, whence to \eqref{YF:divu}, with boundary data $u_0$ and $u_1$ belonging to a larger space as usually (the trace of a generalised Hardy space), and since we want to use standard technics of Shape Optimization, we make the classical assumptions on the boundary data, that is $u_0\in W^{1/2,2}(\Gamma_e)$ and $u_1\in W^{-1/2,2}(\Gamma_e)$.
\par In other words, $\Omega$ can be seen as the solution, if it exists, of the  ``free boundary problem''
\begin{equation}\label{eq1}
\left\{
\begin{array}{ll}
\nabla \cdot \left(\sigma \nabla u \right)=0, & x\in \Omega\\
u=u_0, & x\in \Gamma_e \\
\frac{\partial u}{\partial n}=u_1, & x\in \Gamma_e\\
u=c & x\in \Gamma_p ,
\end{array}\right.
\end{equation}
where $c>0$ is a constant chosen so that $M_0\in \Gamma_p$. Written under this form, this problem looks like a free boundary problem and we could have a temptation to apply Holmgren's uniqueness or a Cauchy-Kowalewski's type theorem. Nevertheless and to our knowledge (see for instance \cite{hormander}), until yet, that is only possible to deduce from these theorems, the existence of a neighborhood of the external boundary $\Gamma_e$ where is defined a local solution of the three first equations of \eqref{eq1}, but the existence of a closed level set $\Gamma_p$ inside the domain delimited by $\Gamma_e$ seems to be a difficult question.
\par Let us say one word on the uniqueness of solutions for this ``free boundary problem'', that is the identifiability of the interior boundary curve $\Gamma_p$ from one pair of Cauchy data on the exterior boundary curve. This question has been already studied, for instance in \cite{Kress1,Kress2} and the proof fits immediately into our frame: $u_0$ and $u_1$ being fixed, if two domains $\Omega_1$ and $\Omega_2$ respectively associated with the interior plasma boundary $\Gamma_p^1$ and $\Gamma_p^2$ satisfy the overdetermined System \eqref{eq1}, and if $\Gamma_p^1$ and $\Gamma_p^2$ have both a Lipschitz regularity, then $\Omega_1=\Omega_2$.
\par Notice that a close family of ``free boundary problems'' have been studied in the past, referring to overdetermined like problems with additional Bernoulli type boundary condition. However, in these problems, the Bernoulli condition applies to the free boundary and permits in general to ensure some kind of regularity of this latter and its uniqueness. The case presented in this section is quite unusual in the context of Shape Optimization since the overdetermined condition applies on the fixed boundary $\Gamma_e$. Let us notice nevertheless that this kind of inverse problem has been strongly studied with different approaches and tools (conformal mapping, integral equations, quasi-reversibility, level-sets and so on) and we mention for instance \cite{Ameur,bourgeois,burger,Haddar,Rundell}. About the literature on free boundary problem with a Bernoulli condition, we refer to \cite{hasl1,hasl2,henrot_shah1,henrot_shah2,henrot_shah3,Ito,LauPri,lindgren}.
\par Furthermore and with respect to the physical experiments, we will make the following assumptions: 
\begin{equation}\label{eqAss}
u_1<0\textrm{ and }u_0<c\textrm{ a.e. on }\Gamma_e,
\end{equation}
coming from the physical frame. Let us notice that, thanks to these additional informations, if we assume moreover that the domain $\Omega$ satisfies the interior sphere property condition, then the application of Hopf's maximum principle yields
$$
\min_{\overline{\Omega}}u=\min_{\Gamma_e} u_0 \textrm{ and } \frac{\partial u}{\partial n}(\textrm{argmin }u)=u_1(\textrm{argmin }u)<0.
$$ 
In the same way, one has also
$$
\max_{\overline{\Omega}}u=\max_{\Gamma_p} u=c, \textrm{ and }\frac{\partial u}{\partial n}_{\mid_{\Gamma_p}}>0.
$$ 
\par For the reasons mentioned before, we decided to see this ``free boundary problem'' as a Shape Optimization one. Indeed, let us introduce, $\Omega$ and $c>0$ being fixed, the solution $u_\Omega$ of the following mixed Dirichlet-Neumann elliptic problem
\begin{equation}\label{eq2}
\left\{
\begin{array}{ll}
\nabla \cdot \left(\sigma \nabla u \right)=0, & x\in \Omega\\
\frac{\partial u}{\partial n}=u_1, & x\in \Gamma_e \\
u=c & x\in \Gamma_p ,
\end{array}\right.
\end{equation}
Let us denote by $D$ the closure of $\mathbb{D}_{R,\rho}$, i.e. the compact set which boundary is $\Gamma_e$. If the capacity of the compact set $D\backslash\Omega$ is strictly positive and $u_1$ belongs to $W^{-1/2,2}(\Gamma_e)$, then this problem has obviously a unique solution, by virtue of Lax-Milgram's Theorem.
\par Now, a natural idea to solve the overdetermined system \eqref{eq2} consists in introducing a least square type shape functional $J$ through the Shape Optimization problem 
\begin{equation}\label{eq3}
\left\{
\begin{array}{l}
\min J(\Omega)=\int_{\Gamma_e}\left(u-u_0\right)^2 ds\\
\Omega \in \mathcal{O}_{\textnormal{ad}}, 
\end{array}\right.
\end{equation}
where $\mathcal{O}_{\textnormal{ad}}$ denotes the set of admissible shapes that has to be clarified. The natural questions arising here are:
\begin{itemize}
\item How to choose $\mathcal{O}_{\textnormal{ad}}$ to ensure the existence of solutions for Problem \eqref{eq3}?
\item In the case where Problem \eqref{eq3} has a solution $\Omega^\star$, can we guarantee that $J(\Omega^\star)=0$?
\end{itemize}
In the following section, we will bring some partial answers to these questions.
\subsection{An existence result}
This section is devoted first to the statement of an existence result for the Shape Optimization problem \eqref{eq3} and secondly to the writing of the first order necessary optimality conditions of this problem. We need to define some convergence notions for the elements of $\mathcal{O}_{\textrm{ad}}$, providing a topology within this class.
\begin{definition}
A sequence of open domains $(\Omega_n)_{n\geq 0}$ is said
\begin{itemize}
\item converging to $\Omega$ for the Hausdorff convergence if
$$
\lim_{n\to +\infty}\textrm{d}_H(D\backslash \Omega_n,D\backslash \Omega)=0,
$$
where $\textrm{d}_H(K_1,K_2)=\max (\rho(K_1,K_2),\rho(K_2,K_1))$, for any $(i,j)\in \{1,2\}^2$, $\rho(K_i,K_j)=\sup_{x\in K_i}d(x,K_j)$, and $\forall x\in D$, $d(x,K_i)=\inf_{y\in K_i}d(x,y)$.
\item converging to $\Omega$ in the sense of characteristic functions if for all $p\in [1,+\infty)$,
$$
\chi_{\Omega_n}\xrightarrow[n\to\infty]{}\chi_{\Omega}\textrm{ in }L^p_{\textrm loc}(\mathbb{R}^2).
$$
\item converging  to $\Omega$ in the sense of compacts if
\begin{enumerate}
\item $\forall K\textnormal{ compact subset of }D, K\subset \Omega \Rightarrow \exists n_0 \in \mathbb{N}^*, \ \forall n\geq n_0, \  K\subset \Omega_n$.
\item $\forall K\textnormal{ compact subset of }D, K\subset D\backslash\overline{\Omega} \Rightarrow \exists n_0 \in \mathbb{N}^*, \ \forall n\geq n_0, \  K\subset D\backslash\overline{\Omega_n}$.
\end{enumerate}
\end{itemize}
\end{definition}
It is in general a hard task to get existence results in Shape Optimization. Not only, many such problems are often ill-posed (see for instance \cite{allaire,HP}), but if the class of admissible domains is too large, a minimizing sequence of domains may converge to a very irregular domain, for which the solution of the partial differential equation may exist in a very weak sense. For these reasons, a partial solution consists in restricting the class of admissible domains, by assuming some kind of regularity. For that purpose, let us define the notion of $\varepsilon$-cone property, introduced in \cite{chenais}.
\begin{definition}
Let $y$ be a point of $\mathbb{R}^2$, $\xi$ a normalized vector and $\varepsilon>0$. We denote by $C(y,\xi,\varepsilon)$, the unpointed cone 
$$
C(y,\xi,\varepsilon)=\{z\in \mathbb{R}^2, \langle z-y,\xi\rangle \geq \cos\varepsilon \Arrowvert z-y\Arrowvert\textrm{ and }0<\Arrowvert z-y\Arrowvert<\varepsilon \}.
$$
We say that an open set $\Omega$ verifies the $\varepsilon$-cone property if
$$
\forall x\in\partial\Omega, \exists \xi_x\in\mathbb{S}^1, \forall y\in\overline{\Omega}\cap B(x,\varepsilon), C(y,\xi_x,\varepsilon)\subset\Omega .
$$
\end{definition}
Let us make precise the class of admissible domains. For a given $\eta >0$, we call $D_\eta$, the annular compact set $D\backslash B(X_0,\eta)$, where $X_0=(0,R)$ is the center of the disk $D$. For $\eta$ and $\varepsilon$, two fixed strictly positive numbers, we define
\[
\mathcal{O}_{\textrm{ad}}^{\varepsilon,\eta}=\{\Omega \text{ open subset of } D_\eta \text{ verifying the } \varepsilon \text{ cone property}, \Gamma_e\subset \partial \Omega, \ \#(D_\eta\backslash\Omega)\leq 1\},
\]
where $\#(D_\eta\backslash\Omega)$ denotes the numbers of connected components of the complementary set of $\Omega$ in $D_\eta$, and we replace the class $\mathcal{O}_{\textrm{ad}}$ by $\mathcal{O}_{\textrm{ad}}^{\varepsilon,\eta}$ in Problem \eqref{eq3}.
One has the following existence result.
\begin{proposition}
Let $\eta$ and $\varepsilon$ be two fixed strictly positive numbers. Then, Problem \eqref{eq3} admits at least one solution.
\end{proposition}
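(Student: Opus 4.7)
The plan is to apply the direct method of the calculus of variations in the framework developed by Chenais and refined in the monograph \cite{HP} for domains satisfying a uniform $\varepsilon$-cone property. Let $(\Omega_n)_{n\geq 0} \in \mathcal{O}_{\text{ad}}^{\varepsilon,\eta}$ be a minimizing sequence for $J$. First, I would invoke Chenais' compactness theorem: since every $\Omega_n$ lies inside the bounded set $D_\eta$ and satisfies the same $\varepsilon$-cone condition, there exists $\Omega^\star$ and a subsequence (not relabeled) such that $\Omega_n$ converges to $\Omega^\star$ simultaneously for the Hausdorff topology, in the sense of characteristic functions in $L^p_{\text{loc}}$, and in the sense of compacts. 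Moreover, $\Omega^\star$ itself enjoys the $\varepsilon$-cone property, hence has Lipschitz boundary. The connectedness constraint $\#(D_\eta\setminus\Omega)\leq 1$ is preserved in the Hausdorff limit, and the inclusion $\Gamma_e\subset \partial\Omega^\star$ is preserved as well, so $\Omega^\star \in \mathcal{O}_{\text{ad}}^{\varepsilon,\eta}$.

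Next, I would establish continuity of the state: $u_{\Omega_n}\to u_{\Omega^\star}$ in an appropriate sense. The $\varepsilon$-cone property provides uniform extension operators, so the solutions $u_n:=u_{\Omega_n}$ to \eqref{eq2}, extended by the constant $c$ on $D\setminus\Omega_n$, form a bounded family in $H^1(D_\eta)$ (the a priori bound follows from testing the variational formulation with $u_n-c$ and using that $\sigma$ is uniformly elliptic and $u_1 \in W^{-1/2,2}(\Gamma_e)$). Up to a subsequence, they converge weakly in $H^1(D_\eta)$ and strongly in $L^2(D_\eta)$ to some limit $u^\star$. By the convergence in the sense of compacts together with the $\varepsilon$-cone property, one obtains $\gamma$-convergence in the sense of Mosco for the mixed boundary problem (see \cite[Ch. 3--4]{HP} or the classical results of Šverák adapted to Neumann-Dirichlet settings with uniform cone regularity), so the limit $u^\star$ coincides with $u_{\Omega^\star}$. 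In particular, the Dirichlet condition $u_n=c$ on $\Gamma_p^n$ passes to $u^\star=c$ on $\Gamma_p^\star$, and the Neumann condition on $\Gamma_e$, which is a \emph{fixed} part of the boundary, passes to the limit in the weak formulation.

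Finally, passing to the limit in the functional is straightforward because $\Gamma_e$ is common to all admissible domains. The trace operator $H^1(D_\eta)\to L^2(\Gamma_e)$ is linear continuous, and the weak convergence $u_n\rightharpoonup u^\star$ in $H^1(D_\eta)$ yields, up to extraction, strong convergence $u_{n\mid \Gamma_e}\to u^\star_{\mid \Gamma_e}$ in $L^2(\Gamma_e)$ by compactness of the trace. Therefore
\begin{equation*}
J(\Omega^\star)=\int_{\Gamma_e}(u^\star-u_0)^2\, ds = \lim_{n\to +\infty}\int_{\Gamma_e}(u_n-u_0)^2\, ds = \inf_{\Omega\in \mathcal{O}_{\text{ad}}^{\varepsilon,\eta}}J(\Omega),
\end{equation*}
so $\Omega^\star$ is a minimizer.

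The main obstacle is the continuity of the state with respect to $\Omega$: because the Dirichlet condition is imposed on the \emph{moving} boundary $\Gamma_p$ while the Neumann condition is on the fixed outer boundary, one cannot simply invoke the classical $\gamma$-convergence result for pure Dirichlet problems. The $\varepsilon$-cone property is exactly what is needed to control the Lipschitz constants of the boundaries uniformly, ensuring uniform trace inequalities, uniform extension operators, and the preservation of connectedness components (this is why $\#(D_\eta\setminus \Omega)\leq 1$ and $\Omega\subset D_\eta$ are included in the definition of $\mathcal{O}_{\text{ad}}^{\varepsilon,\eta}$). Once this uniform regularity is in hand, the Hausdorff and compact convergence of the domains transfer to convergence of the solutions, and the remaining steps are routine.
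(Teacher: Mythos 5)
Your proposal is correct and follows essentially the same route as the paper: the direct method of the calculus of variations, with compactness of the class $\mathcal{O}_{\textrm{ad}}^{\varepsilon,\eta}$ from the uniform $\varepsilon$-cone property (Hausdorff, characteristic-function and compact convergences), a uniform $H^1$ bound obtained by testing with $u_n-c$, weak passage to the limit in the variational formulation, stability of the Dirichlet condition on the moving boundary via \cite[Theorems 2.4.10 and 3.4.7]{HP}, and continuity of $J$ through trace compactness on the fixed boundary $\Gamma_e$. The only cosmetic difference is in the $L^2$ bound on $u_n$: you implicitly rely on a uniform Poincar\'e inequality (valid here since $D\backslash\Omega_n$ always contains $B(X_0,\eta)$), whereas the paper uses a Hopf maximum-principle comparison with the solution on $D_{\eta/2}$; both work.
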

To prove this proposition, we follow the direct method of calculus of variations. Although this technic is standard, we hope that this result, in the particular frame of plasma Physics, is new.  
\begin{proof}
Let $(\Omega_n)_{n\geq 0}$ be a minimizing sequence for Problem \eqref{eq3} and let $\Gamma_{p,n}$ be the part of the boundary contained in $\Omega_n$, that is $\partial\Omega_n\backslash \Gamma_e$. Since for any $n\in\mathbb{N}$, $\Omega_n$ is included in the compact set $D_\eta$, there exists $\Omega^\star$ such that $(\Omega_n)_{n\geq 0}$ converges to $\Omega^\star$ for the Hausdorff metric and in the sense of characteristic functions (see \cite[Chapter 2]{HP}). Let us denote by $\Gamma_p^\star$ its internal boundary. Furthermore, since any $\Omega_n$ satisfies an uniform $\varepsilon$ cone property, the sequence $(\Omega_n)_{n\geq 0}$ converges to $\Omega^\star$ in the sense of compacts and necessarily, $\Omega^\star$ satisfies the uniform $\varepsilon$-cone property \cite[Theorem 2.4.10]{HP}.
\par To prove the existence result, we need to prove at least the lower semi-continuity of the criterion $J$ in the class $\mathcal{O}_{\textrm{ad}}^{\varepsilon,\eta}$. 
\par Let us first prove that the sequence $(u_{\Omega_n})_{n\geq 0}$ is bounded in $W^{1,2}(D_\eta)$. Notice that due to the homogeneous Dirichlet boundary condition on the lateral boundary $\Gamma_{p,n}$, we can extend $u_{\Omega_n}$  by $c$ outside $\Gamma_{p,n}$. So we can consider that the functions are all defined on the box $D_\eta$ (or $D$) and the integrals over $\Gamma_{p,n}$ and over $D_\eta$ (or $D$) will be the same. For that purpose, let us multiply Equation \eqref{eq2} by $u_{\Omega_n}-c$ and then integrate by part. From Green's formula, we get
$$
C_1\int_{\Omega_n}|\nabla u_{\Omega_n}|^2 \leq \int_{\Omega_n}\sigma|\nabla u_{\Omega_n}|^2 =\int_{\Gamma_e}\sigma u_1(u_{\Omega_n}-c),
$$
and the right hand side is uniformly bounded with respect to $n$, due to the fact that the sequence $(J(\Omega_{n}))_{n\geq 0}$ is convergent and hence bounded. Now, it remains to prove that the sequence $(u_{\Omega_n})_{n\geq 0}$ is bounded in $L^2(\Omega_n)$ (in the sense that $\int_{\Omega_n}u_{\Omega_n}(x)^2dx$ is uniformly bounded with respect to $n$). This is actually a consequence of the non positivity of $u_1$. Indeed, the use of Hopf's Theorem yields immediately that $u_{\Omega_n}$ attains its maximum value at $x_m\in\Gamma_e\cup \Gamma_{p,n}$ and that $\frac{\partial u_{\Omega_n}}{\partial n}(x_m)>0$ and hence, $\max_{\overline{\Omega_n}} u_{\Omega_n}={u_{\Omega_n}}_{\mid_{\Gamma_{p,n}}}=c$. Notice that the regularity condition on $\Omega_n$ required to apply Hopf's lemma is satisfied by choosing every $\Omega_n$ having a regular enough boundary, which is clearly not restrictive. Let us introduce $u_{\eta/2}$, the solution of \eqref{eq2} set in the interior of $D_{\eta/2}$. A comparison between $u_{\Omega_n}$ and $u_{\eta/2}$, using Hopf's maximum principle ensures that for any $n\in\mathbb{N}$, $u_{\eta/2}\leq u_{\Omega_n}\leq c$ a.e. in $D$. Thus, $\beta=\max(\Arrowvert u_{\eta/2}\Arrowvert_{L^2(D)},\pi \rho^2c)$ is an uniform bound of the $L^2(D)$ norm of $u_{\Omega_n}$, which proves the $W^{1,2}(D)$ boundedness of $u_{\Omega_n}$, and shows at the same time that $(u_{\Omega_n})_{n\geq 0}$ is bounded in $W^{1,2}(D_\eta)$.
\par As a result, using Banach Alaoglu's and Rellich-Kondrachov's Theorems, there exists a function $u^\star\in W^{1,2}(D_\eta)$ such that
$$
u_{\Omega_n}\stackrel{W^{1,2}(D_{\eta/2})}{\rightharpoonup}u^\star\textrm{ and }u_{\Omega_n}\stackrel{L^{2}(D_{\eta/2})}{\rightarrow}u^\star\textrm{ as }n\to+\infty .
$$
It remains to prove that $u^\star$ is the solution of Equation \eqref{eq2} on $\Omega^\star$. Let us notice that, once this assertion will be proved, the continuity of the shape functional $J$ will immediately follow, because of the above convergence result. Let us write the variational formulation of \eqref{eq2}. For any function $\varphi$ satisfying $\varphi\in W^{1,2}(D_{\eta/2})$, $\varphi=0$ on $\Gamma_e\cup\Gamma_{p}^\star$, and for all $n \in\mathbb{N}$, the function $u_{\Omega_n}$ verifies
\begin{equation}\label{FVun}
\int_{D}\sigma \nabla u_{\Omega_n}\cdot \nabla \varphi =\int_{\Gamma_e}\sigma u_1\varphi .
\end{equation}
It suffices to use the weak $H^1$-convergence  of $u_{\Omega_n}$ to $u^\star$ to show that $u^\star$ satisfies also \eqref{FVun}. To conclude, it remains to prove that $u^\star=c$ on $\Gamma_p^\star$. This is actually a consequence of the convergence of $(\Omega_n)_{n\geq 0}$ into $\Omega^\star$ and the fact that $\Omega^\star$ has a Lipschitz boundary. We refer to Theorem 2.4.10 and Theorem 3.4.7 in \cite{HP}. Finally, let us notice that the inclusion $\Gamma_e\subset \partial \Omega^\star$ is verified. Indeed, it follows from the stability of the inclusion for the Hausdorff convergence. To be convinced, one way consist in considering that $\Gamma_e$ is the boundary of an open set $\Sigma$ that does not contain $D$, and to replace the constraint $\Gamma_e\subset \partial \Omega$ by $\Sigma \subset \widetilde{\Omega}$, where $\widetilde\Omega=\Omega\cup\Gamma_e\cup\Sigma$. Hence, the stability of the inclusion of open sets for the Hausdorff convergence applies and the conclusion follows. 
\end{proof}
Notice that the existence result do not guarantee that the solution $u_{\Omega^\star}$ of \eqref{eq2} set in the optimal domain $\Omega^\star$ satisfies the Cauchy conditions on $\Gamma_e$, that is the nonhomogeneous Dirichlet and Neumann boundary conditions on $\Gamma_e$.
\subsection{First order optimality conditions}
Let us now write the first order optimality conditions for Problem \eqref{eq3}. For that purpose, we will first compute the shape derivative of the criterion $J$. The major difficulty when dealing with sets of shapes is that they do not have a vector space structure. In order to be able to define shape derivatives and study the sensitivity of shape functionals, we need to construct such a structure for the shape spaces. In the literature, this is done by considering perturbations of an initial domain; see \cite{allaire,HP,Mu-Si,So-Zo}. 
\par Practically speaking, it is preferable to enlarge the class of admissible domains to write the first order optimality conditions, even if there is a risk of loosing the existence result. Indeed, it would be else strongly difficult for instance to take into account the fact that any admissible domain must satisfy an $\varepsilon$-cone property. For this reason, let us define
\begin{equation}\label{Oad}
\mathcal{O}_{\textrm{ad}}=\{\Omega\textrm{ open subset of $D$}, \#(D_\eta\backslash\Omega)\leq 1\}.
\end{equation}
Let $\Omega\in\mathcal{O}_{\textrm{ad}}$ given. Let us consider a regular vector field $V:\mathbb{R}^2\to \mathbb{R}^2$ with compact support, which does not meet the fixed boundary $\Gamma_e$. For small $t>0$, we define $\Omega_t=(I+tV)\Omega$, the image of $\Omega$ by a perturbation of Identity and $f(t):=J(\Omega_t)$. We recall that the shape derivative of $J$ at $\Omega$ with respect to $V$ is 
$$
f'(0)=\lim_{t\searrow 0}\frac{J(\Omega_t)-J(\Omega)}{t}.
$$
We will denote it by $dJ(\Omega ;V)$. Let us now compute this shape derivative. The classical formulae yield
$$
dJ(\Omega ;V)=2\int_{\Gamma_e}(u_\Omega-u_0)\dot u ds,
$$
where $\dot u$ denotes the shape derivative of $u$, and is solution of the system
\begin{equation}\label{eq4}
\left\{
\begin{array}{ll}
\nabla \cdot \left(\sigma \nabla \dot u\right)=0, & x\in \Omega\\
\frac{\partial \dot u}{\partial n}=0, & x\in \Gamma_e\\
\dot u=-\frac{\partial u_\Omega}{\partial n}(V\cdot n) & x\in \Gamma_p.
\end{array}\right.
\end{equation}
The proof of such an assertion may be found for instance in \cite[Section 3.1]{IKP}.
It is more convenient to work with another expression of the shape derivative and to write it as a distribution with support $\Gamma_p$. For that purpose, let us introduce the adjoint state $p$ defined formally as the solution of the system
\begin{equation}\label{adjointp}
\left\{
\begin{array}{ll}
\nabla \cdot \left(\sigma \nabla p\right)=0, & x\in \Omega\\
\sigma \frac{\partial p}{\partial n}=2(u_\Omega-u_0), & x\in \Gamma_e\\
p=0 & x\in \Gamma_p.
\end{array}\right.
\end{equation}
\begin{proposition}\label{PropAdjoint}
Assume that $\partial\Omega$ is $C^2$ (in other words, $\Gamma_p$ is $C^2$). Then, the criterion $J$ is differentiable with respect to the shape at $\Omega$ and one has for all admissible perturbation $V\in W^{1,\infty}(\mathbb{R}^2,\mathbb{R}^2)$,
$$
dJ(\Omega;V)=\int_{\Gamma_p}\sigma \frac{\partial p}{\partial n}\frac{\partial u_\Omega}{\partial n}(V\cdot n)ds.
$$
As a consequence, if Problem \eqref{eq3} has a solution $\Omega^\star$ having a internal boundary $\Gamma_{i}^\star$ which is a $C^2$ closed curve, then $\Omega^\star$ solves the overdetermined problem \eqref{eq1}.
\end{proposition}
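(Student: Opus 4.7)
The plan is to reduce the Hadamard formula $dJ(\Omega;V)=2\int_{\Gamma_e}(u_\Omega-u_0)\dot u\,ds$, given just before the statement, to a boundary integral supported on $\Gamma_p$ by means of the adjoint state $p$ defined in \eqref{adjointp}. The key manipulation is a double application of Green's identity to the $\sigma$-harmonic pair $(p,\dot u)$ on $\Omega$, which gives
\begin{equation*}
\int_{\partial\Omega}\sigma\,\dot u\,\partial_n p\,ds \;=\; \int_{\Omega}\sigma\nabla p\cdot\nabla\dot u \;=\; \int_{\partial\Omega}\sigma\,p\,\partial_n\dot u\,ds.
\end{equation*}
Using $p=0$ on $\Gamma_p$ and $\partial_n\dot u=0$ on $\Gamma_e$ from System \eqref{eq4}, the rightmost integral vanishes; splitting the leftmost one over $\Gamma_e\cup\Gamma_p$ and substituting the adjoint Neumann datum $\sigma\partial_n p=2(u_\Omega-u_0)$ on $\Gamma_e$ together with $\dot u=-(\partial_n u_\Omega)(V\cdot n)$ on $\Gamma_p$ produces the announced expression. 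The $C^2$ assumption on $\partial\Omega$ enters via standard elliptic regularity, which provides $H^2$ regularity of $u_\Omega$ and $p$ up to the boundary and thereby gives a rigorous meaning to every normal trace appearing on $\Gamma_p$; the shape differentiability of $\Omega\mapsto u_\Omega$ and the characterization of $\dot u$ by \eqref{eq4} are borrowed from \cite{IKP}.

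For the consequence, suppose $\Omega^\star$ is an optimal domain with a $C^2$ internal boundary $\Gamma_i^\star$. Admissible deformations $V\in W^{1,\infty}(\mathbb{R}^2,\mathbb{R}^2)$ with support disjoint from $\Gamma_e$ leave $V\cdot n$ completely unconstrained on $\Gamma_i^\star$, so the first order optimality condition $dJ(\Omega^\star;V)=0$ combined with the fundamental lemma of the calculus of variations yields $\sigma\,(\partial_n p)(\partial_n u_{\Omega^\star})=0$ almost everywhere on $\Gamma_i^\star$. Since $\sigma\geq C_1>0$, the sign assumption \eqref{eqAss} and Hopf's lemma applied to $u_{\Omega^\star}$, exactly as in the proof of existence, force $\partial_n u_{\Omega^\star}>0$ pointwise on the smooth curve $\Gamma_i^\star$. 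Consequently $\partial_n p\equiv 0$ on $\Gamma_i^\star$ and, combined with $p=0$ there, $p$ satisfies a homogeneous Cauchy problem for $\nabla\cdot(\sigma\nabla\cdot)$ issuing from $\Gamma_i^\star$.

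The step I expect to be the main obstacle is propagating this vanishing to the whole domain. Since $\sigma$ is assumed to be analytic in Section \ref{sec:shape}, Holmgren's uniqueness theorem applies to the homogeneous equation $\nabla\cdot(\sigma\nabla p)=0$ and yields $p\equiv 0$ throughout $\Omega^\star$; if analyticity were dropped, one would instead invoke a Carleman estimate, which is the technical price to pay. Once $p\equiv 0$ is established, reading the Neumann condition $\sigma\partial_n p=2(u_{\Omega^\star}-u_0)$ on $\Gamma_e$ backwards forces $u_{\Omega^\star}=u_0$ on $\Gamma_e$. Combined with the data built into \eqref{eq2}, namely $\partial_n u_{\Omega^\star}=u_1$ on $\Gamma_e$, $u_{\Omega^\star}=c$ on $\Gamma_i^\star$, and the elliptic equation satisfied in $\Omega^\star$, this shows that $u_{\Omega^\star}$ fulfills every line of \eqref{eq1}, i.e.\ $\Omega^\star$ solves the overdetermined free boundary problem.
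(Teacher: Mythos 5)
Your argument is correct and follows essentially the same route as the paper: the same double application of Green's identity to the pair $(p,\dot u)$ to transport the shape derivative from $\Gamma_e$ to $\Gamma_p$, the same use of $\pm V$ to get $\partial_n p\,\partial_n u_{\Omega^\star}=0$ on $\Gamma_i^\star$, and the same final application of Holmgren's theorem to the homogeneous Cauchy data of $p$ to conclude $p\equiv 0$ and hence $u_{\Omega^\star}=u_0$ on $\Gamma_e$. The only (harmless) divergence is that you rule out $\partial_n u_{\Omega^\star}=0$ on $\Gamma_i^\star$ via Hopf's lemma and the sign assumptions \eqref{eqAss}, whereas the paper does so by a first application of Holmgren's theorem on a positive-measure subset; both are valid.
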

\begin{proof} The shape differentiability is standard (see \cite{DZ,HP}) under the assumption that the boundary of $\Omega$ is regular enough. Let us now prove the expression of the shape derivative. First multiply equation (\ref{eq4}) by $p$ and integrate on $\Omega$. The use of the Green formula yields
$$
-\int_{\Omega}\sigma \nabla p\cdot \nabla \dot u =-\int_{\Gamma_e}\sigma \frac{\partial \dot u}{\partial n}p =0.
$$
Multiplying equation (\ref{adjointp}) by $\dot u$ and integrating by parts yields
$$
-\int_{\Omega}\sigma \nabla p\cdot \nabla \dot u =-\int_{\Gamma_e\cup \Gamma_p}\sigma \frac{\partial p}{\partial n}\dot u .
$$
The conclusion follows easily. Now, let us investigate the first order optimality conditions. Since $\Gamma_{i,\star}$ is Lipschitz and surrounds a domain whose interior is nonempty, we deduce that if $V$ belongs to the cone of admissible perturbations, then so does $-V$. Hence,
$$
\frac{\partial p}{\partial n}\frac{\partial u_\Omega}{\partial n}=0\textrm{ on }\Gamma_p^\star ,
$$
and it follows that there exists a subset of $\Gamma_p^\star$ of nonzero surface measure on which $u_{\Omega^\star}=c$ and $\frac{\partial p}{\partial n}\frac{\partial u_{\Omega^\star}}{\partial n}=0$. Furthermore, by virtue of Holmgren's theorem, necessarily $\frac{\partial u_{\Omega^\star}}{\partial n}\neq 0$ on such a subset. Hence $\frac{\partial p}{\partial n}=0$ on this subset, and since one has also $p=0$ on $\Gamma_p^\star$, a second application of Holmgren's theorem yields $p\equiv 0$. In other words, $u_{\Omega^\star}=u_0$, which is the desired result.
\end{proof}
\subsection{Numerical computations}
The method presented here is a gradient method, based on the use of the shape derivative $dJ(\Omega;V)$ computed in Proposition \ref{PropAdjoint}. It consists basically in finding a deformation of a given domain $\Omega$ guaranteeing the decrease of the criterion $J(\Omega)$. To sum up, the algorithm writes:
\begin{itemize}
\item Initial guess $\Omega_0\in\mathcal{O}_{\textrm{ad}}$ is given;
\item Iteration $k$: $\Omega_{k+1} = (Id + t_kV_k)\Omega_k$ where $V_k$ is a vector field shrewdly chosen and $t_k$ the time step;
\end{itemize}
Practically speaking,  we are looking for a vector field $V_k$ such that
$$
dJ(\Omega_k, V_k) = \int_{\Gamma_p}\nabla J(\Omega_k)\cdot V_k ds< 0,
$$
where
$$
\nabla J(\Omega_k) = -\sigma \frac{\partial u_{\Omega_k}}{\partial n}\frac{\partial p_k}{\partial n}n,
$$
and $p_k$ is the adjoint state, solution of Problem \eqref{adjointp} set on the domain $\Omega_k$.
One possible choice for $V_k$ is 
\begin{equation}\label{Vk}
\int_{\Gamma_p}\nabla J(\Omega_k)\cdot V_k = -\Arrowvert V_k \Arrowvert_{H^1(\Omega_k)}^2.
\end{equation}
For a precise description of this method or others like the level set method and examples of applications in the domain of Shape Optimization, we refer for instance to \cite{allaire,burger,dogan,deGournay,protas}.
To ensure \eqref{Vk}, let us write $V_k$ as the solution of the problem written under variational form
\begin{equation}\label{FVeq1}
\begin{array}{l}
\text{Find } V_k \in \left(H^1(\Omega_k)\right)^2 \text{ such that }V_k=0\text{ on }\Gamma_e\\
\text{and }\int_{\Omega_k}(\nabla V_k : \nabla \varphi +V_k\cdot \varphi )dx = -\int_{\Gamma_p}\nabla J(\Omega_k)\cdot\varphi ds,
\end{array}
\end{equation}
where the test functions $\varphi$ describe the space of functions $H^1(\Omega_k)^2$ verifying $\varphi = 0$ on $\Gamma_e$, and the symbol ``:'' denotes the tensorial notation used to represent the \textit{doubly contracted tensorial product}, i.e. for $A$ and $B$, two smooth vector fields of $\mathbb{R}^2$,
$$
A:B=\sum_{i,j=1}^2A_{ij}B_{ij}=\textrm{tr}(A\otimes B^\top).
$$
Notice that \eqref{FVeq1} is equivalent to the partial differential equation 
\begin{equation}\label{diffusion}
\left\{
\begin{array}{ll}
-\Delta V_k +V_k = 0 & x \in \Omega_k\\
V_k = 0, & x\in \Gamma_e\\
\frac{\partial V_k}{\partial n} = -\nabla J(\Omega_k), & x \in \Gamma_p^k.\\
\end{array}\right.
\end{equation}\
Written under this form, it is easy to see that this partial differential equation is decoupled.
\par Let us summarize the gradient algorithm:
\begin{enumerate}
\item \textit{Initialization: }Choice of an initial domain $\Omega_0$ and of a real number $\varepsilon>0$;
\item \textit{Iteration $k$: }
	\begin{enumerate}
	\item computation of the solutions $u_{k}$ and $p_{k}$ of Problems (\ref{eq2}) and (\ref{adjointp});
	\item evaluation of $\nabla J(\Omega_k)$ on $\Gamma_p^k$;
	\item computation of the solution $V_k$ of Problem \eqref{diffusion};
	\item determination of $\Omega_{k+1} = (Id + t_k V_k)\Omega_k$;
	\end{enumerate}
\item \textit{stopping criterion: }The algorithm stops when $|J(\Omega_{k+1})-J(\Omega_k)|<\varepsilon$
\end{enumerate} \ \\
An alternative approach, inverting the role of Dirichlet and Neumann conditions in \eqref{eq2}, has also been tested. More precisely, the criterion to minimize has been changed to
$$
J(\Omega)=\int_{\Gamma_e}\left(\frac{\partial u_\Omega}{\partial n}-u_1\right)^2ds,
$$
where $u_\Omega$ solves
\begin{equation}\label{eq5}
\left\{
\begin{array}{ll}
\nabla \cdot \left(\sigma \nabla u \right)=0, & x\in \Omega\\
u=u_0, & x\in \Gamma_e \\
u=c & x\in \Gamma_p .
\end{array}\right.
\end{equation}
We observed that both approach gave similar results. Before presenting our numerical results on the tokamak \textsl{Tore Supra}, let us begin with some numerical tests, in order to verify the efficiency of our method.
\par A first test case has been done, taking $\sigma=1$, and using the fact that the solution of the Laplacian operator is know on an annular domain. Indeed, let us assume that $\Omega=D_2\backslash D_1$ where $D_1$ (respectively $D_2$) denotes the disk centered at the origin of radius $R_1>0$ (respectively $R_2>0$). Then, the solution of   
\begin{equation}\label{test1}
\left\{
\begin{array}{ll}
\Delta u = 0, & x\in \Omega\\
u = 0, & x \in \partial D_1\\
u = c_0 & x \in \partial D_2
\end{array}\right.
\end{equation}\ \\
is given by
$$
u=c_0\frac{\ln \sqrt{x^2+y^2}-\ln R_1}{\ln R_2-\ln R_1}.
$$
Note that the normal derivative of $u$ on $\partial D_2$ is clearly
$$
\frac{\partial u}{\partial n}=\frac{c_0}{R_2(\ln R_2-\ln R_1)}.
$$
A first test can then be made, looking for the internal radius $R_1$, assuming that the normal derivative of $u$ on $\partial D_2$ is constant equal to $c_1>0$ and the expected solution is obviously
$$
R_1=R_2\exp\left(-\frac{c_0}{R_2 c_1}\right).
$$
The results for this case are plotted on Figure \ref{fig1}.
\begin{figure}[htp]
\center
\includegraphics[scale=0.3]{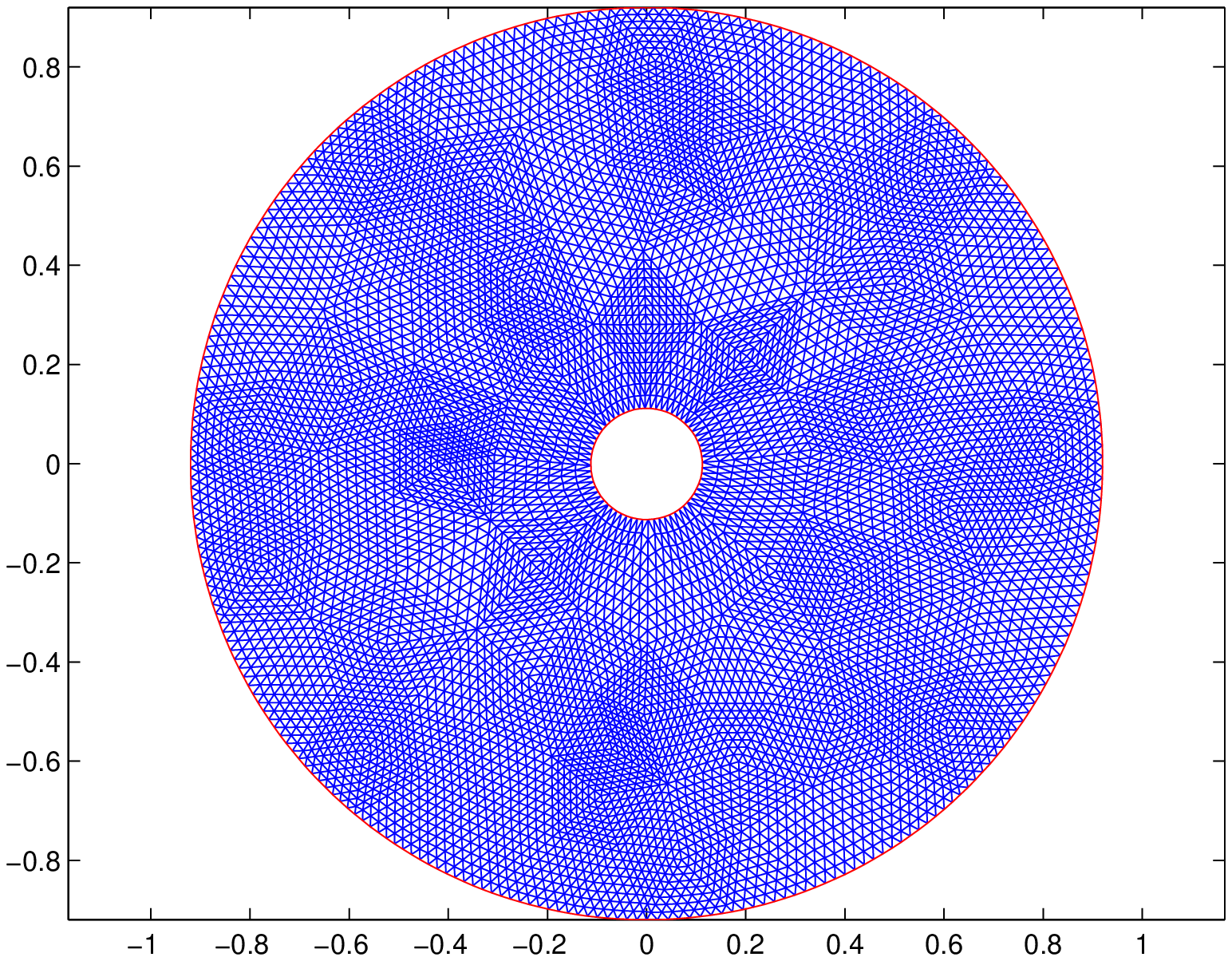}
\includegraphics[scale=0.3]{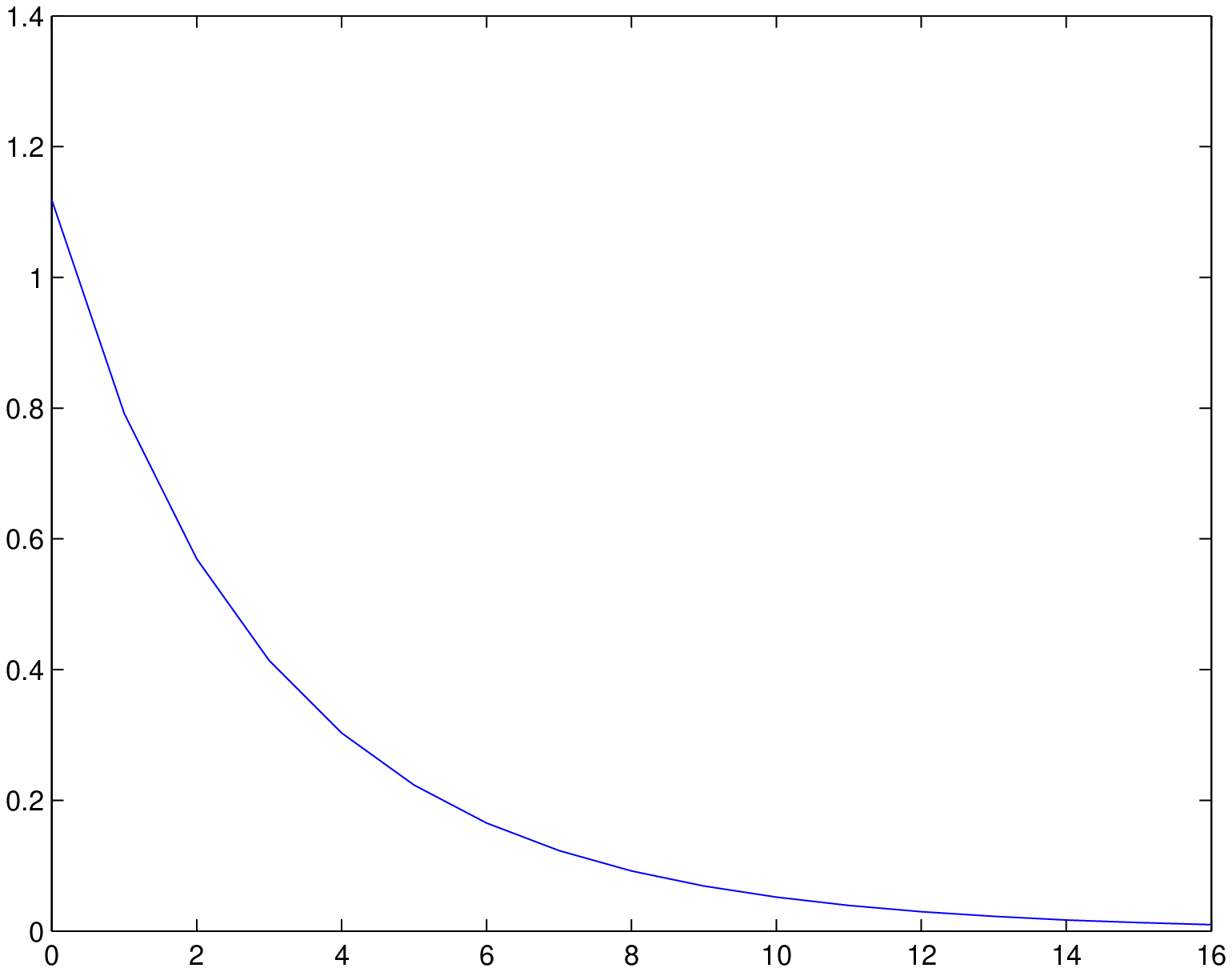}
\includegraphics[scale=0.3]{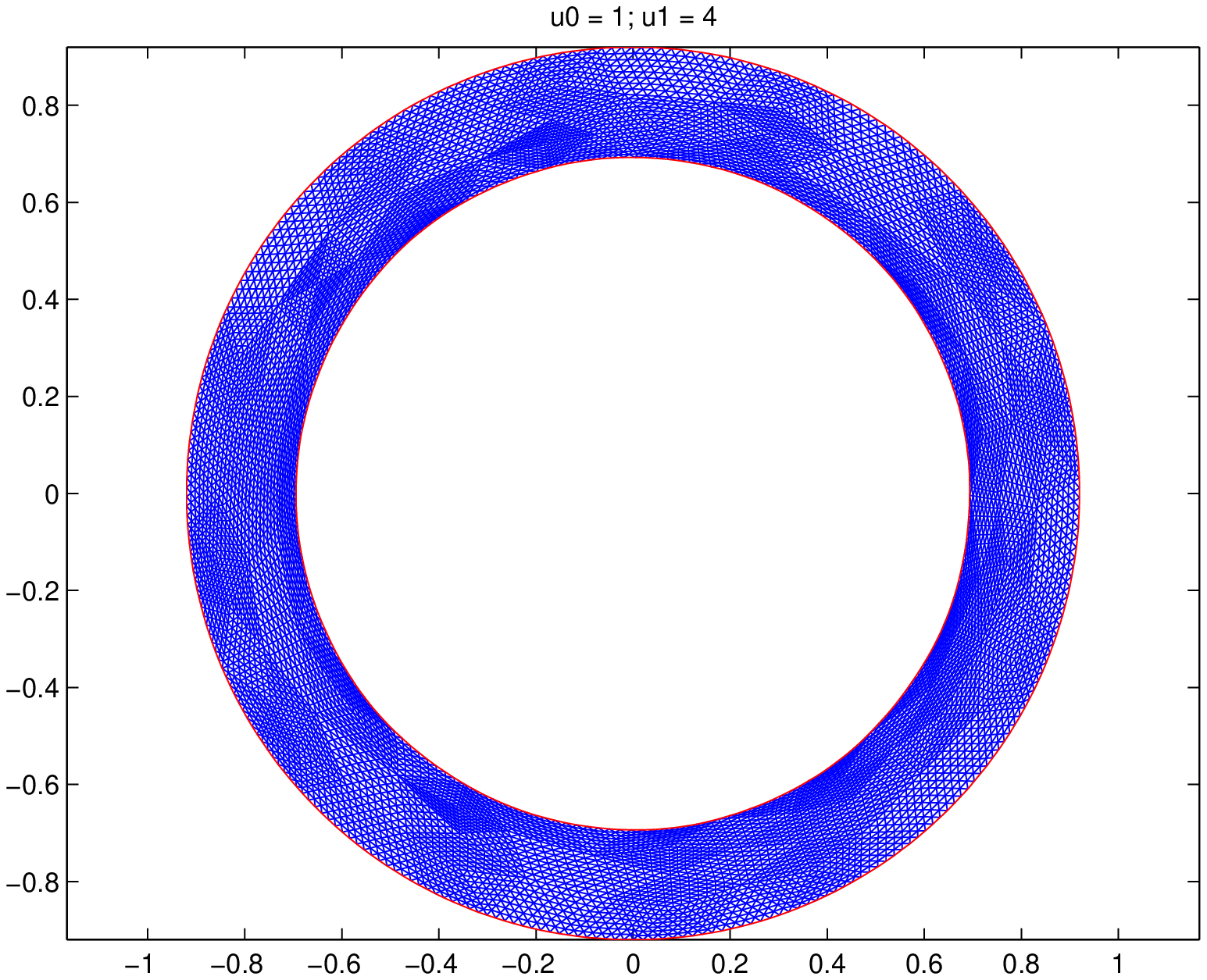}
\includegraphics[scale=0.3]{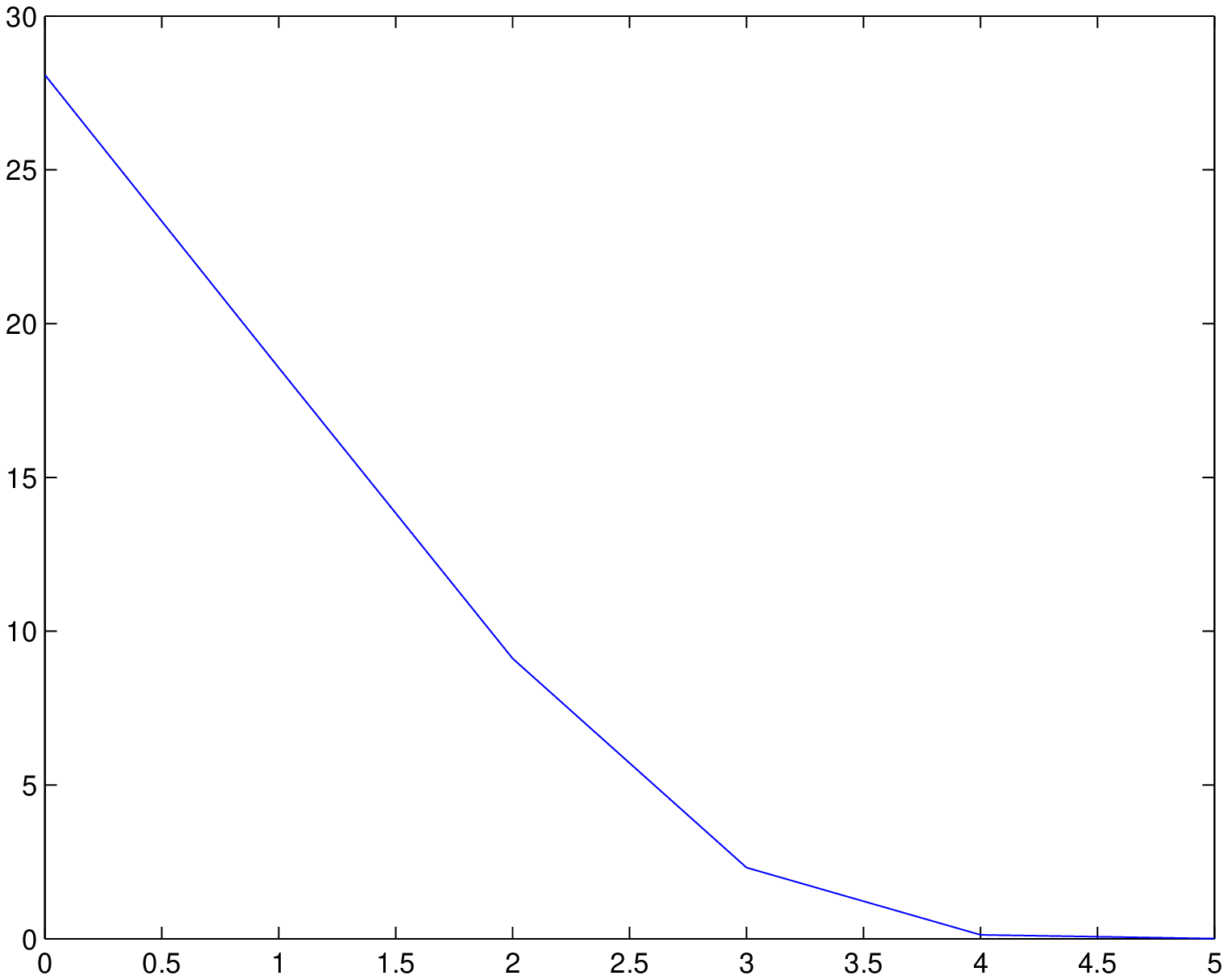}
\caption{Up: optimum computed (left) and criterion (right) for the test case $c_0 = 2$ and $c_1 = 1$. Hence, the expected value of $R_1$ is $0.10$ which is the case. Down: optimum computed (left) and criterion (right) for the test case $c_0 = 1$ and $c_4 = 1$. Hence, the expected value of $R_1$ is $0.70$ which is the case}\label{fig1}
\end{figure}\ \\
\par A second test case has been built, by choosing an arbitrary domain $\Omega_c$ and a function $u_0$, computing the solution $u$ of \eqref{eq2}, and choosing for $u_1$, the quantity $\frac{\partial u}{\partial n}$. Running the program with these definitions of $u_0$ and $u_1$ permits to compare the computed domain with the theoretical domain $\Omega_c$. The results for this case are plotted on Figure \ref{fig2}.
\begin{figure}[htp]
\center
\includegraphics[scale=0.3]{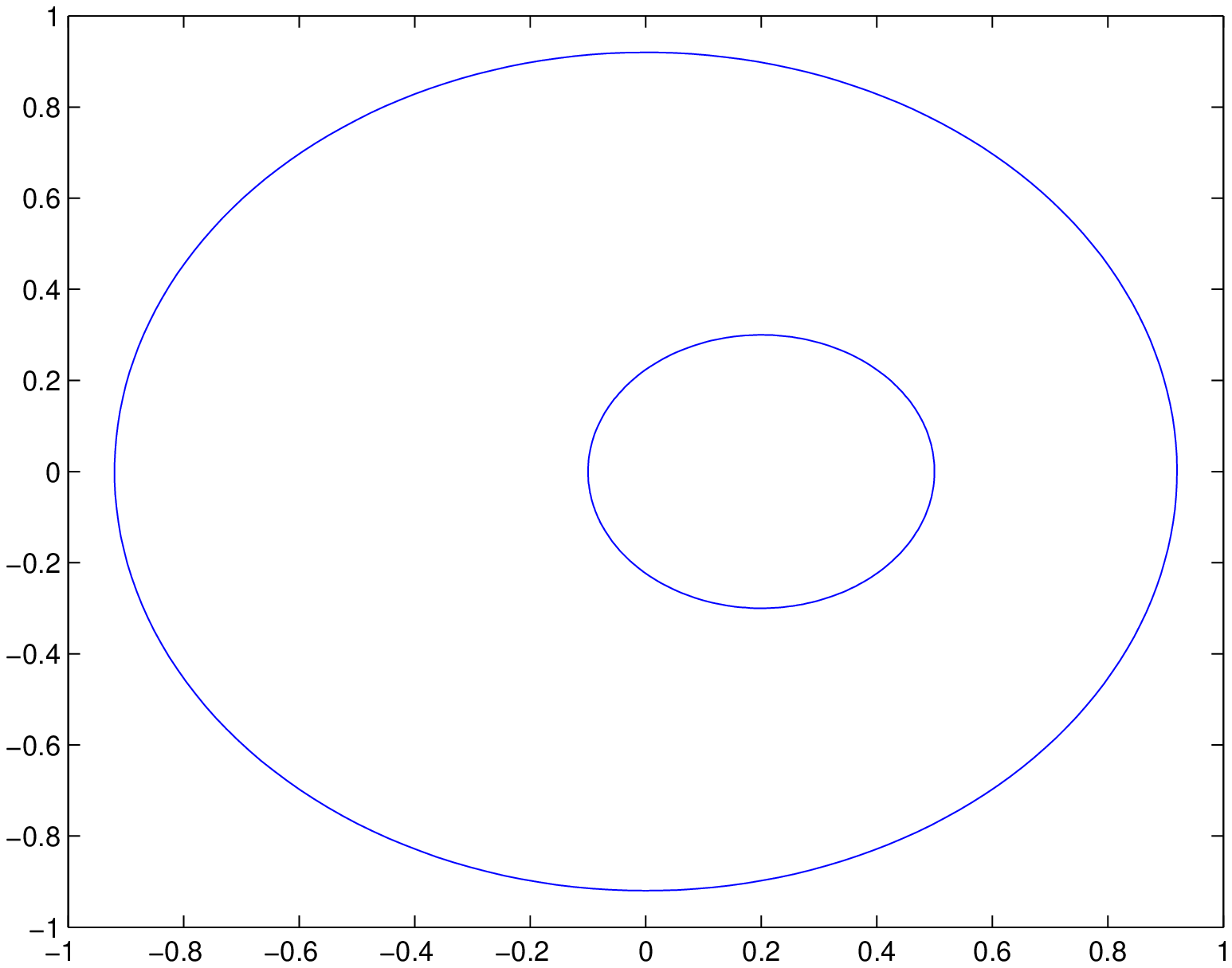}
\includegraphics[scale=0.3]{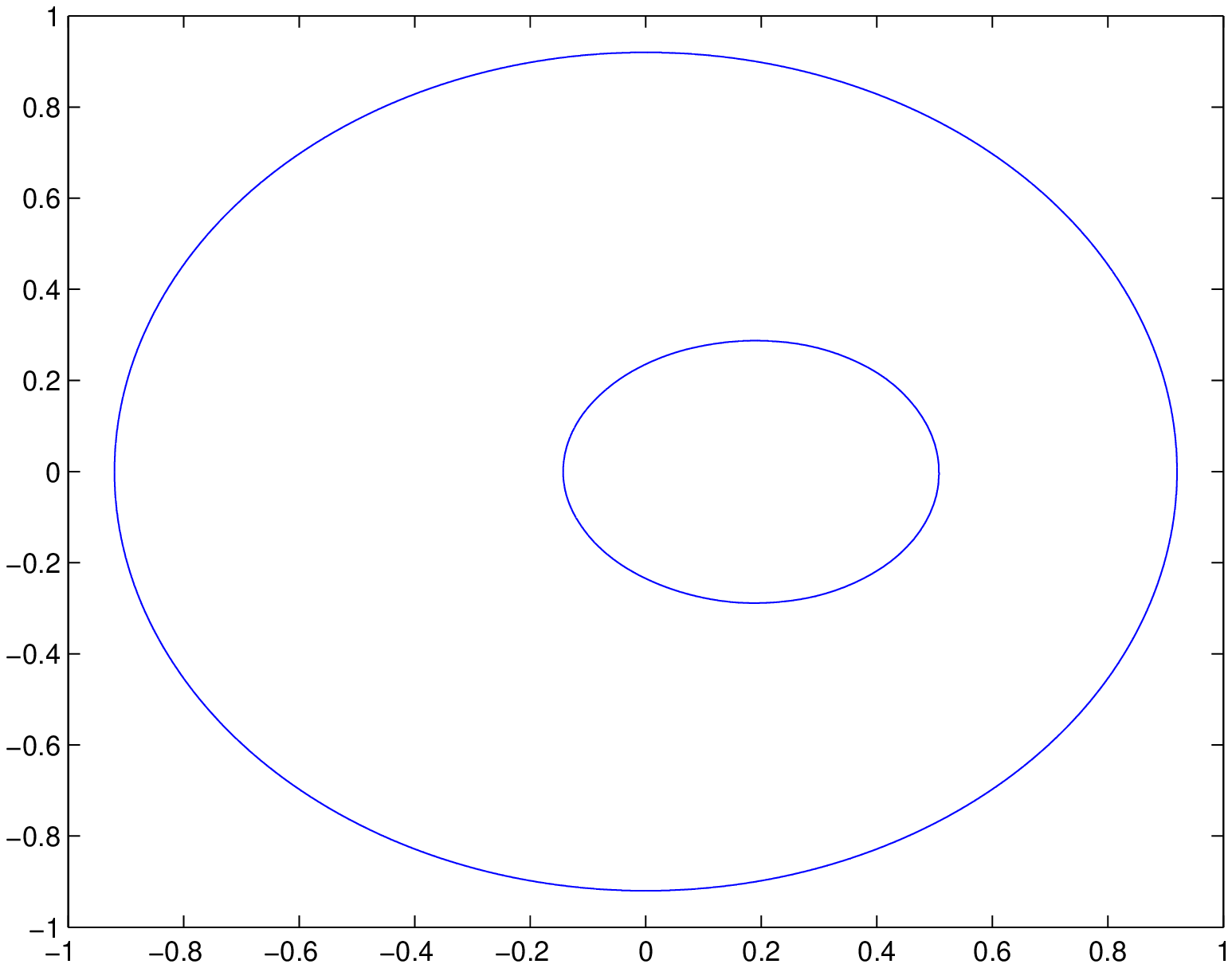}
\includegraphics[scale=0.3]{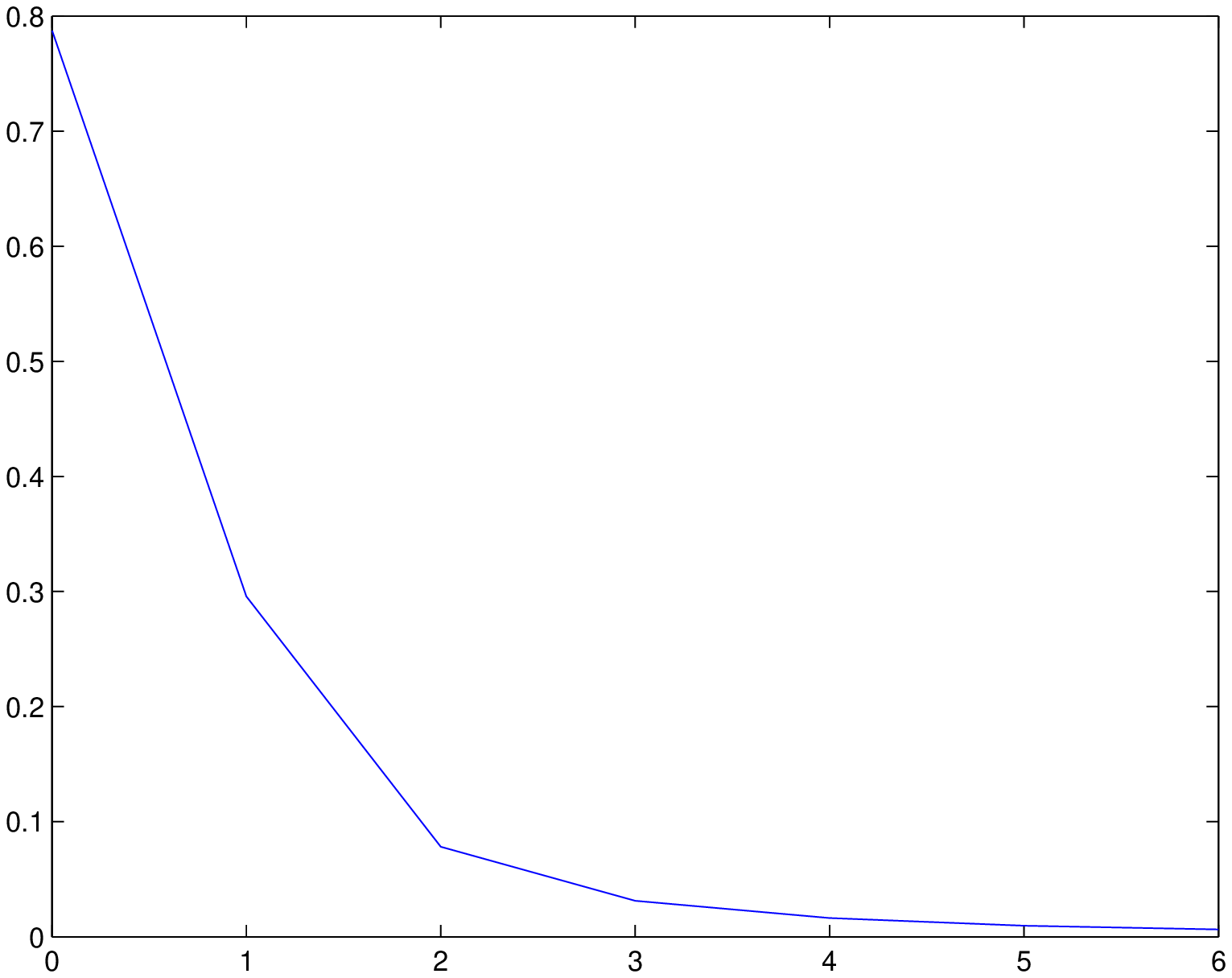}
\caption{Domain $\Omega_c$ expected (left), Domain $\Omega$ computed (right) and criterion (down) in the case $u_0 = -0.2x^2 - 0.5y^2$}\label{fig2}
\end{figure}
\par Let us now present the simulations of the plasma shape in the tokamak {\sl Tore Supra}. 
Measures of $u_0$ and $u_1$ on the external boundary of the tokamak, made by Physicists from CEA/IRFM, are the starting point of our algorithm. We interpolated these data thanks to cubic splines to have a complete definition of these functions  (see Figure \ref{fig3}). All the simulations presented here have been realized using the software Matlab.
\begin{figure}[htp]
\center
\includegraphics[scale=0.3]{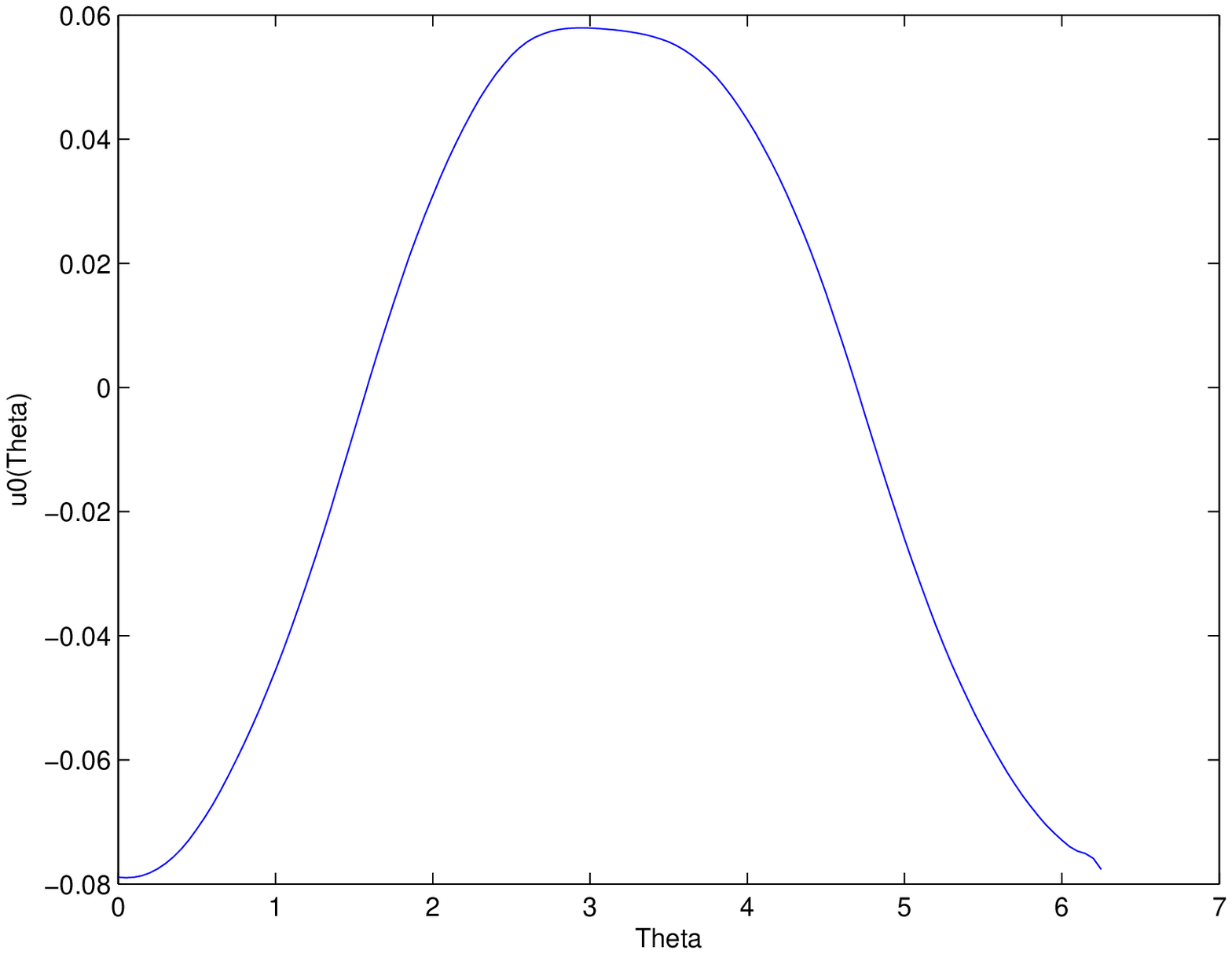}
\includegraphics[scale=0.3]{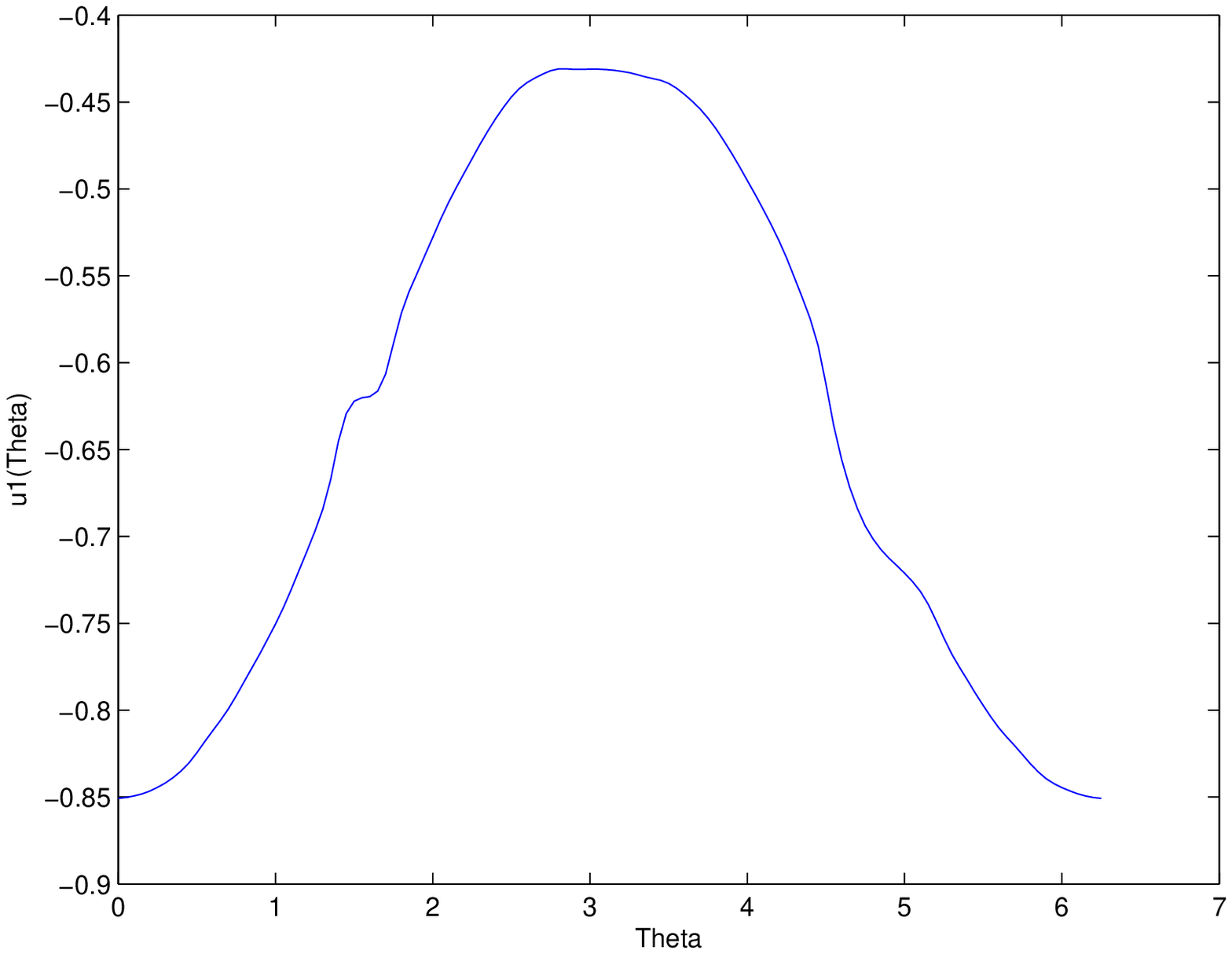}
\caption{$\theta \mapsto u_0(\theta)$ (left) and $\theta \mapsto u_1(\theta) (right)$}\label{fig3}
\end{figure}
With these functions, the algorithm provided the following results plotted on Figure \ref{fig4}.
\begin{figure}[htp]
\center
\begin{tabular}{cc}
\includegraphics[scale=0.38]{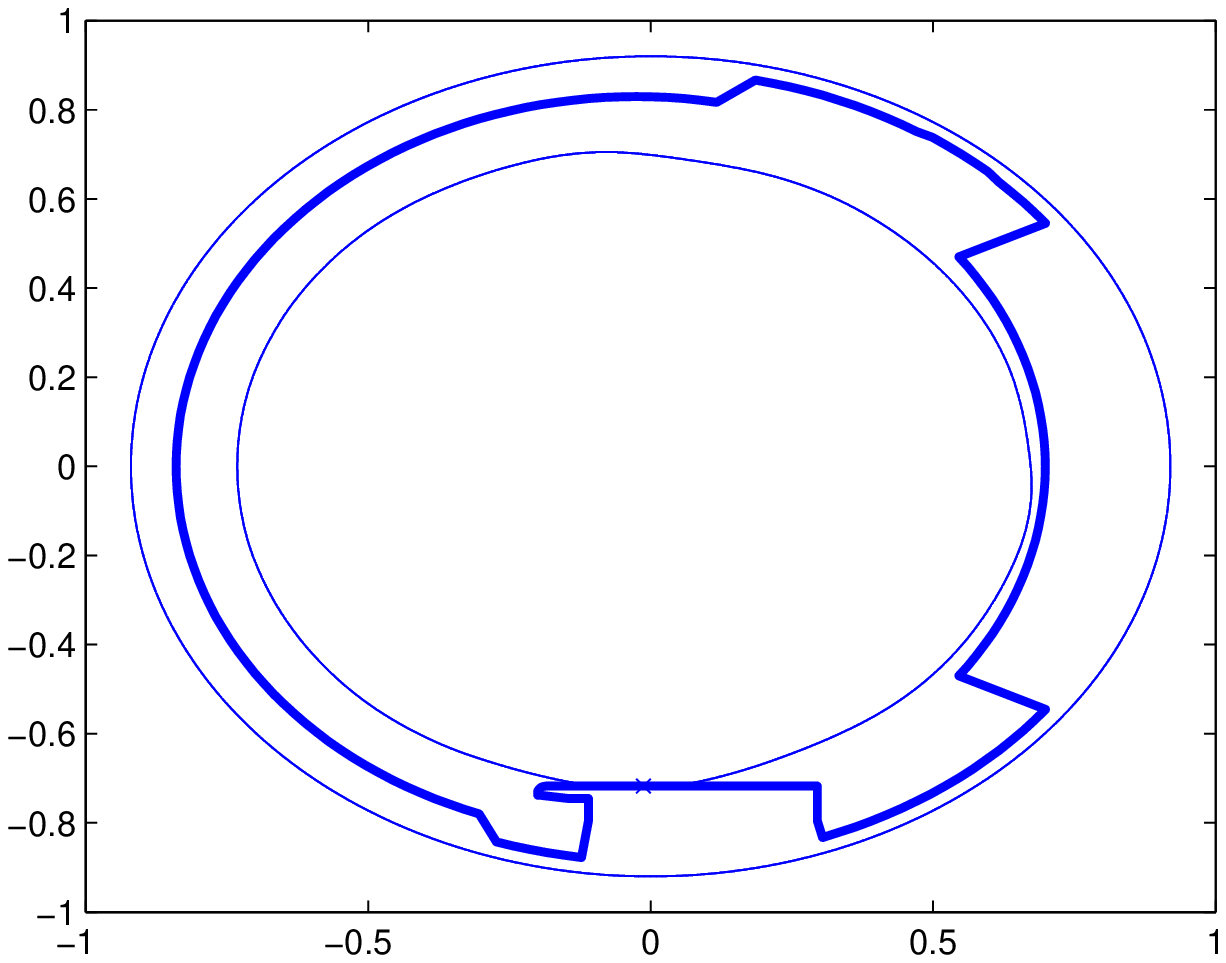} 
&
\includegraphics[scale=0.3]{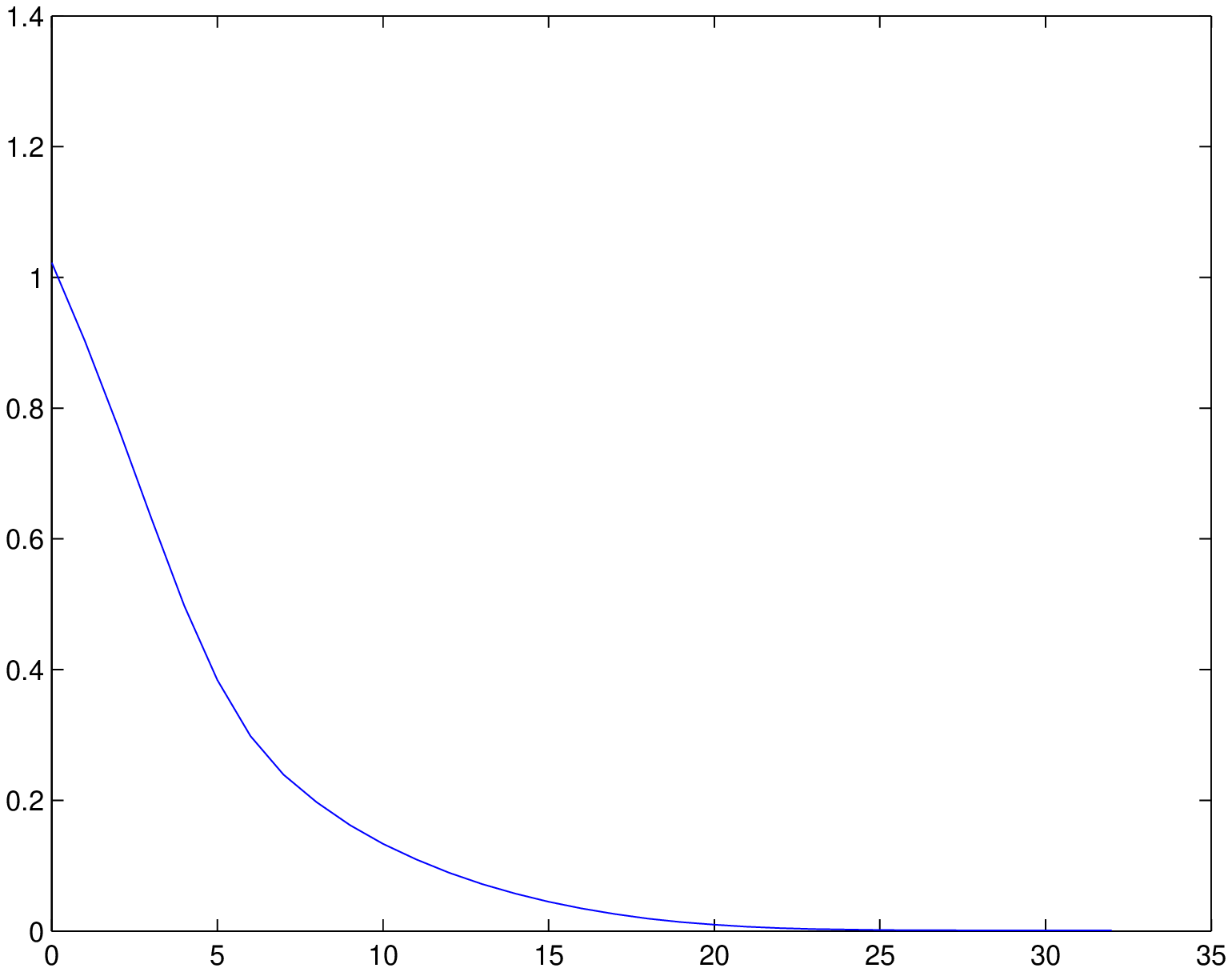}
\end{tabular}
\caption{Closed curves from outside to inside: external boundary, toroidal belt limiter, computed shape of the plasma (left) and criterion (right)}\label{fig4}
\end{figure}
Notice that we did not impose in our algorithm for the plasma to stay inside the limiter, but we fortunately observed it. The point marked by a cross is a control point and the free boundary has to contain this point (it is on the limiter). These conditions being fulfilled, validate our method.
\par A linked interesting inverse problem consists in finding the value of the constant $c$ introduced in Problem \eqref{eq1} from our knowledge of the point $M_0$ belonging to the shape of the plasma. Our algorithm can easily perform the value of such a constant. Indeed, solving Problem \eqref{eq1} with an arbitrary constant $c_1\gg c$ yields a solution $u$ and the associated optimal domain is $\Omega_1=\{u_0\leq u\leq c_1\}$. In fact, the true solution is provided by $\Omega^\star=\{u_0\leq u \leq u(M_0)\}$, where $M_0$ is a point of the limiter where $u$ reaches its maximal value (that is $c$ !). It can be seen as a continuation method up to a flux level set touching the limiter.  Indeed, it is very easy to show that, as a consequence of the maximum principle, $\Omega^\star$ satisfies \eqref{eq0}. On Figure \ref{fig5} is an example of the solution obtained by running the program with $c_1 = 0.2$, the expected constant being closed to $0.153$.
\begin{figure}[htp]
\center
\includegraphics[scale=0.3]{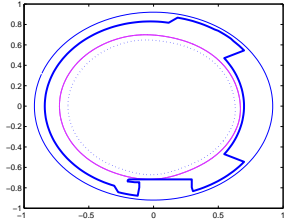}
\caption{Closed curves from outside to inside: external boundary, toroidal belt limiter (bold), $\Omega_{c_1}$ and $\Omega^\star$ (dashed). The constant $c$ evaluated is $0.1569$ whereas the constant $c$ measured by Physicists is $0.1531$}\label{fig5}
\end{figure} \ \\

\textbf{Acknowledgments.} The authors are grateful to Fran\c{c}ois Saint Laurent, engineer of research at CEA/IRFM of Cadarache in the team GPAS, for making available the use of internal data and for all advices and comments. The authors would like to thank Laurent Baratchart, Antoine Henrot and Juliette Leblond for their very constructive suggestions and comments.

\medskip

\begin{thebibliography}{99}

\bibitem{ABL}(MR1208299)[10.1007/BFb0115054]
   \newblock D. Alpay, L. Baratchart and J. Leblond,
   \newblock  \emph{Some extremal problems linked with identification from a partial frequency data},
   \newblock  in ``Lectures Notes in Control and Information Science,"
               Springer Verlag, \textbf{185} (1993), 563--573.

\bibitem{allaire}(MR1859696)
    \newblock G. Allaire,
    \newblock ``Shape Optimization by the Homogenization Method,"
    \newblock Applied Mathematical Sciences, \textbf{146}, Springer-Verlag, New York, 2002.

\bibitem{ALACRI}
    \newblock F. Alladio and F. Crisanti,
    \newblock \emph{Analysis of MHD equilibria by toroidal multipolar expansions},
    \newblock Nuclear Fusion, \textbf{26} (1986), 1143--1164.

\bibitem{ARPI}(MR2456059)
    \newblock M. Ariola and A. Pironti,
    \newblock ``Magnetic Control of Tokamak Plasmas,"
    \newblock Advances in Industrial Control Series, Springer, London, 2008.

\bibitem{AP}(MR2264207)
    \newblock K. Astala and L. P\"{a}iv\"{a}rinta,
    \newblock \emph{A boundary integral equation for Calder\'{o}m's inverse conductivity problem},
    \newblock Proceedings of El Escorial, (2006), 127--139.

\bibitem{Ameur}(MR2067495)[10.1088/0266-5611/20/3/003]
    \newblock H. Ben Ameur, M. Burger and B. Hackl,
    \newblock \emph{Level set methods for geometric inverse problems in linear elasticity},
    \newblock Inverse Problems, \textbf{20} (2004), 673--696.

\bibitem{BL1}(MR1486389)[10.1007/s003659900062]
    \newblock L. Baratchart and J. Leblond,
    \newblock \emph{Hardy approximation to $L^p$ functions on subsets of the circle with $1 \leq p < \infty$},
    \newblock Constructive approximation, \textbf{14} (1998), 41--56.

\bibitem{BL2}(MR1405007)[10.1007/s003659900022]
    \newblock L. Baratchart, J. Leblond and J. R. Partington,
    \newblock \emph{Hardy approximation to $L^{\infty}$ functions on subsets of the circle with $1 \leq p < \infty$},
    \newblock Constructive approximation, \textbf{12} (1996), 423--436.

\bibitem{BLRR}(MR2644107)[10.1016/j.jfa.2010.04.004]
    \newblock L. Baratchart, J. Leblond, S. Rigat and E. Russ,
    \newblock \emph{Hardy spaces of the conjugate Beltrami equation},
    \newblock Journal of Functional Analysis, \textbf{259} (2010), 384--427.

\bibitem{Beau}(MR0889253)
    \newblock B. Beauzamy,
    \newblock ``Introduction to Banach Spaces and their Geometry,"
    \newblock Mathematics studies, North-Holland, New York, 1985.

\bibitem{BN}(MR0076981)
    \newblock L. Bers and L. Nirenberg,
    \newblock \emph{On a representation theorem for linear elliptic systems with discontinuous coefficients and its applications},
    \newblock Convegno Internazionale Sulle Equazioni Derivate e Parziali, (1954), 111--138.

\bibitem{Blum}(MR0996236)
    \newblock J. Blum,
    \newblock ``Numerical Simulation and Optimal Control in Plasma Physics with Applications to Tokamaks,"
    \newblock Series in Modern Applied Mathematics, Wiley Gauthier-Villars, Paris, 1989.

\bibitem{bourgeois}(MR2671101)[10.3934/ipi.2010.4.351]
    \newblock L. Bourgeois and J. Dard\'e,
    \newblock \emph{A quasi-reversibility approach to solve the inverse problem obstacle problem},
    \newblock Inverse Problems and Imaging, \textbf{4/3} (2010), 351--377.

\bibitem{burger}(MR1998617)[10.4171/IFB/81]
    \newblock M. Burger,
    \newblock \emph{A framework for the construction of level set methods for shape optimization and reconstruction},
    \newblock Interfaces and Free Boundaries, {\textbf 5} (2003), 301--329.

\bibitem{burger2}(MR2044618)[10.1088/0266-5611/20/1/016]
    \newblock M. Burger,
    \newblock \emph{Levenberg-Marquardt levet set methods for inverse obstacle problem},
    \newblock Inverse Problems, \textbf{14} (1998), 685-706.

\bibitem{Cam}(MR0668196)
    \newblock S. Campanato,
    \newblock ``Elliptic Systems in Divergence Form. Interior Regularity,"
    \newblock Quaderni, Scuola Normale Superiore Pisa, 1980.

\bibitem{CBL}(MR1972169)[10.1163/156939403322004928]
   \newblock S. Chaabane, M. Jaoua and J. Leblond,
   \newblock  ``Parameter Identification for Laplace Equation and Approximation in Hardy Classes,"
   \newblock  J. Inverse Ill-Posed Probl, \textbf{11} (2003), 33--57.

\bibitem{chenais}(MR0385666)
    \newblock D. Chenais,
    \newblock \emph{On the existence of a solution in a domain identification problem},
    \newblock J. Math. Anal. Appl., {\textbf 52} (1975), 189--289.

\bibitem{deGournay}(MR2225309)[10.1137/050624108]
    \newblock F. de Gournay,
    \newblock \emph{Velocity extension for the level-set method and multiple eigenvalues in shape optimization},
    \newblock SIAM J. Control Optim., {\textbf 45} (2006), 343--367.

\bibitem{DZ}(MR1855817)
    \newblock M. Delfour and J. P. Zol{\'e}sio,
    \newblock ``Shapes and Geometries. Analysis, Differential Calculus, and Optimization,"
    \newblock Advances in Design and Control SIAM, PA, Philadelphia, 2001.

\bibitem{dogan}(MR2340012)[10.1016/j.cma.2006.10.046]
    \newblock G. Dog\u{g}an, P. Morin, R.H. Nochetto and M. Verani,
    \newblock \emph{Discrete Gradient Flows for Shape Optimization and Applications},
    \newblock Comput. Methods Appl. Mech. Engrg., {\textbf 196} (2007), 3898--3914.

\bibitem{Duren}(MR0268655)
    \newblock P. L. Duren,
    \newblock ``Theory of $H^p$ Spaces,"
    \newblock Pure and Applied Mathematics, Academic Press, New York - London, 1970.

\bibitem{FLPS}(MR2806484)[10.1016/j.acha.2011.01.003]
    \newblock Y. Fischer, J. Leblond, J. R. Partington and E. Sincich,
    \newblock \emph{Bounded extremal problems in Hardy spaces for the conjugate Beltrami equations in simply connected domains},
    \newblock Applied and Computational Harmonic Analysis, {\textbf 31} (2011), 264--285.

\bibitem{IKP}(MR2183542)[10.1016/j.jmaa.2005.03.100]
    \newblock K. Ito, K. Kunisch  and G. Peichl,
    \newblock \emph{Variational approach to shape derivatives for a class of Bernoulli problems},
    \newblock J. Math. Anal. Appl., 314 (2006) 126--149.

\bibitem{JLMP}(MR2529334)[10.1093/imamat/hxn041]
    \newblock M. Jaoua, J. Leblond, M. Mahjoub and J. R. Partington,
    \newblock \emph{Robust numerical algorithms based on analytic approximation for the solution of inverse problems in annular domains},
    \newblock IMA Journal of Applied Mathematics, \textbf{74} (2009), 481--506.

\bibitem{Gar}(MR0628971)
    \newblock J. Garnett,
    \newblock ``Bounded Analytic Functions,"
    \newblock Pure and Applied Mathematics, Academic Press, New York - London, 1981.

\bibitem{Haddar}(MR2146814)[10.1088/0266-5611/21/3/009]
    \newblock H. Haddar and R. Kress,
    \newblock \emph{Conformal mappings and inverse boundary value problem},
    \newblock Inverse Problems, \textbf{21} (2005), 935--953.

\bibitem{hasl1}(MR2511644)[10.4171/IFB/213]
    \newblock J. Haslinger, K. Ito, T. Kozubek, K. Kunisch and G. Peichl,
    \newblock \emph{On the shape derivative for problems of {B}ernoulli type},
    \newblock Interfaces Free Bound., \textbf{11} (2009), 317--330.

\bibitem{hasl2}(MR2013364)[10.1023/A:1026095405906]
    \newblock J. Haslinger, T. Kozubek, K. Kunisch and G. Peichl,
    \newblock \emph{Shape optimization and fictitious domain approach for solving free boundary problems of {B}ernoulli type},
    \newblock Comput. Optim. Appl., \textbf{26} (2003), 231--251.

\bibitem{HP}(MR2512810)
    \newblock A. Henrot and M. Pierre,
    \newblock ``Variation et Optimisation de Formes,"
    \newblock Math\'ematiques et Applications, Springer, 2005.

\bibitem{henrot_shah1}(MR1752296)[10.1515/crll.2000.031]
    \newblock A. Henrot and H. Shahgholian,
    \newblock \emph{Existence of classical solutions to a free boundary problem for the {$p$}-{L}aplace operator. {I}. {T}he exterior convex case},
    \newblock J. Reine Angew. Math., \textbf{521} (2000), 85--97.

\bibitem{henrot_shah2}(MR1777029)[10.1512/iumj.2000.49.1711]
    \newblock A. Henrot and H. Shahgholian,
    \newblock \emph{Existence of classical solutions to a free boundary problem for the {$p$}-{L}aplace operator. {II}. {T}he interior convex case},
    \newblock Indiana Univ. Math. J., \textbf{49} (2000), 311--323.

\bibitem{henrot_shah3}(MR1885658)[10.1090/S0002-9947-02-02892-1]
    \newblock A. Henrot and H. Shahgholian,
    \newblock \emph{The one phase free boundary problem for the {$p$}-{L}aplacian with non-constant {B}ernoulli boundary condition},
    \newblock Trans. Amer. Math. Soc., \textbf{354} (2002), 2399--2416.

\bibitem{hormander}(MR1275197)
    \newblock L. H\"ormander,
    \newblock \emph{Remarks on Holmgren's uniqueness theorem},
    \newblock Annales de l'institut Fourier, {\bf 43} (1993), 1223--1251.

\bibitem{Ito}(MR2183542)[10.1016/j.jmaa.2005.03.100]
    \newblock K. Ito, K. Kunisch and G. H. Peichl,
    \newblock \emph{Variational approach to shape derivatives for a class of {B}ernoulli problems},
    \newblock J. Math. Anal. Appl., \textbf{314} (2006), 126--149.

\bibitem{Klibanov}(MR1136005)[10.1137/0151085]
    \newblock M. V. Klibanov and F. Santosa,
    \newblock \emph{A computational quasi-reversibility method for Cauchy problems for Laplace's equations},
    \newblock SIAM J. Appl. Math., \textbf{51} (1991), 1653--1675.

\bibitem{Kozlov}(MR1099360)[]
    \newblock V. A. Kozlov, V. G. Maz'ya and A. V. Fomin,
    \newblock \emph{An iterative method for solving the cauchy problem for elliptic equation},
    \newblock Comput. Math. Phys., \textbf{31} (1991), 45--52.

\bibitem{Kress1}(MR1723850)
    \newblock R. Kress,
    \newblock ``Linear Integral Equations,"
    \newblock 2nd edn Berlin, Springer,  1999.

\bibitem{Kress2}(MR2079577)[10.1016/j.matcom.2004.02.006]
    \newblock R. Kress,
    \newblock \emph{Inverse Dirichlet problem and conformal mapping},
    \newblock Math. Comput. Simul., \textbf{66} (2004), 255--265.

\bibitem{Lattes}(MR0232549)
    \newblock R. Latt\`es and J. L. Lions,
    \newblock ``M\'ethode de Quasi-r\'eversibilit\'e et Applications,"
    \newblock Dunod, 1967.

\bibitem{LauPri}(MR2079577)[10.1016/j.matcom.2004.02.006]
    \newblock A. Laurain, Y. Privat
    \newblock \emph{On a Bernoulli problem with geometric constraints},
    \newblock ESAIM Control Optim. Calc. Var., 18 (\textbf{1}) (2012), 157--180.

\bibitem{lindgren}(MR2338115)[10.1016/j.na.2006.08.045]
    \newblock E. Lindgren and Y. Privat,
    \newblock \emph{A free boundary problem for the Laplacian with a constant Bernoulli-type boundary condition},
    \newblock Nonlinear Anal., \textbf{67} (2007), 2497--2505.

\bibitem{Mu-Si}
    \newblock F. Murat and J. Simon,
    \newblock ``Sur le contr\^ole par un domaine g\'eom\'etrique,"
    \newblock Publication du Laboratoire d'Analyse Num\'erique de l'Universit\'e Paris 6, \textbf{189}, 1976.

\bibitem{protas}(MR2043131)[10.1016/j.jcp.2003.08.031]
    \newblock B. Protas, T-R Bewley and G. Hagen,
    \newblock \emph{A computational framework for the regularization of adjoint analysis in multiscale pde systems},
    \newblock J. Comput. Phys., \textbf{195} (2004), 49--89.

\bibitem{Rundell}(MR2558294)[10.3934/ipi.2009.3.319]
    \newblock W. Rundell,
    \newblock \emph{Recovering an obstacle using integral equations},
    \newblock Inverse Problems and Imaging, \textbf{3/2} (2009), 319--332.

\bibitem{Saint}
    \newblock F. Saint-Laurent and G. Martin,
    \newblock \emph{Real time determination and control of the plasma localisation and internal inductance in Tore Supra},
    \newblock Fusion Engineering and Design, \textbf{56-57} (2001), 761--765.

\bibitem{GRAD}(MR0093282)
    \newblock V. D. Shafranov,
    \newblock \emph{On magnetohydrodynamical equilibrium configurations},
    \newblock Soviet Physics JETP, \textbf{6} (1958), 545--554.

\bibitem{So-Zo}(MR1215733)
    \newblock J. Sokolowski and J. P. Zolesio,
    \newblock ``Introduction to Shape Optimization {\small Shape Sensitivity Analysis},"
    \newblock Oxford Engineering Science Series, Clarendon Press, Oxford, 1987.

\bibitem{Tikhonov}(MR0455365)
    \newblock A. N. Tikhonov and V. Y. Arsenin,
    \newblock ``Solutions of Ill-Posed Problems,"
    \newblock Winstons and Sons, 1977.

\bibitem{Wesson}
    \newblock J. Wesson,
    \newblock ``Tokamaks,"
    \newblock Series in Computational Mathematics, Springer, Berlin, 1992.

\end{thebibliography}
\end{document}